\def \To{\longrightarrow}
\def \Q{\operatorname{Q}}
\def \bi{\bowtie}
\def \q{\mathbbm{q}}
\def \Ker{\operatorname{Ker}}
\def \k{\kappa}
\def \D{\Delta}
\def \e{\varepsilon}
\def \M{\mathrm{M}}
\def \unit{\mathbf{1}}
\numberwithin{equation}{section}
\newtheorem{theorem}{Theorem}[section]
\newtheorem{lemma}[theorem]{Lemma}
\newtheorem{proposition}[theorem]{Proposition}
\newtheorem{corollary}[theorem]{Corollary}
\newtheorem{remark}[theorem]{Remark}
\begin{document}
\title{The Quasi-Hopf analogue of $\mathbf{u}_{q}(\mathfrak{sl}_{2})$}
\author{Gongxiang Liu}
\address{Department of Mathematics, Nanjing University, Nanjing 210093, China} \email{gxliu@nju.edu.cn}
\maketitle

\begin{abstract} In \cite{Gel}, some quasi-Hopf algebras of dimension $n^{3}$, which can be understood as the quasi-Hopf analogues of
 Taft algebras,  are constructed. Moreover, the quasi-Hopf analogues of generalized Taft algebras are  considered in \cite{HLY}, where the language of
 the dual of a quasi-Hopf algebra is used.  The Drinfeld doubles of such quasi-Hopf algebras are computed in this paper. The authors in \cite{EG} shew that the
 Drinfeld double of a quasi-Hopf algebra of dimension $n^{3}$ constructed in \cite{Gel} is always twist equivalent to Lusztig's small quantum group $\mathbf{u}_{q}(\mathfrak{sl}_{2})$ if $n$ is odd.
 Based on computations and analysis, we show that this is \emph{not} the case if $n$ is even. That is, the quasi-Hopf analogue $\Q\mathbf{u}_{q}(\mathfrak{sl}_{2})$
  of $\mathbf{u}_{q}(\mathfrak{sl}_{2})$ is gotten.
\end{abstract}

\section{Introduction}

Historically, the Lusztig's definition of a quantum group \cite{Lus} opens a convenient door for a pure mathematician to go into
the field of quantum groups. Do we have such definition for a quasi-quantum group? For a simple finite-dimensional
 Lie algebra $\mathfrak{g}$ over $\mathbb{C}$, Drinfeld [Proposition 3.16, 3] told us
that a quatitriangular quasi-Hopf quantized enveloping algebra $U\mathfrak{g}[[h]]$ is indeed twist equivalent to the usual quantum group $U_{h}\mathfrak{g}$.
So there is no really quasi-quantum group attached to a simple finite-dimensional Lie algebra. But, how about the restricted case? That is, do we have quasi-Hopf analogue of Lusztig's
definition of a small quantum group?

The aim of the paper and following works is to find the quasi-Hopf analogues of Lusztig's small quantum groups and consequently give some new examples
of finite dimensional nonsemisimple quasitriangular quasi-Hopf algebras. As a try, we want to give the quasi-Hopf analogue of
$\mathbf{u}_{q}(\mathfrak{sl}_{2})$
in this paper. Inspired by the Hopf case, one can believe that it should be the Drinfeld double of the quasi-Hopf analogue of a Taft algebra.
Meanwhile, the general theory
of the Drinfeld double for a quasi-Hopf algebra was already developed by Majid, Hausser-Nill and Schauenburg \cite{Maj,FN,Sch} and
the quasi-Hopf analogues, denoted by $A(n,q)$, of Taft algebras were discovered by
Gelaki \cite{Gel}.  So all things were prepared, and the only task is to compute them out.

But, before computation, Etingof and Gelaki shew that almost nothing new will be created \cite{EG}! Precisely, as one conclusion of
the main result in \cite{EG},  they proved that the double $D(A(n,q))$ is always twist equivalent
to $\mathbf{u}_{q}(\mathfrak{sl}_{2})$ if $n$ is odd.
 There is a restriction on Etingof-Gelaki's result,  that is, $n$ must be odd. Is this condition necessary? Our answer is YES.
 As one of main results of this paper, we show that $D(A(n,q))$  is \emph{not} twist equivalent to a Hopf algebra
 if $n$ is even and consequently the quasi-Hopf analogue of $\mathbf{u}_{q}(\mathfrak{sl}_{2})$ is gotten.  We will prove the result in a general setting.

 In \cite{HLY}, all pointed Majid algebras $M(n,s,q)$ of finite representation type are classified.
  Such pointed Majid algebras are indeed the dual of the class of basic quasi-Hopf algebras $A(n,s,q)$
 which can be considered as the
 quasi-Hopf analogues of generalized Taft algebras. Note that the quasi-Hopf algebras $A(n,s,q)$ also appeared in \cite{Ang}.
 Maybe, the only contribution of this paper is
 to compute $D(A(n,s,q))$ out explicitly. The main result of paper can be described as follows.

 \begin{theorem}\emph{ (1)} As a quasi-Hopf algebra, $D(A(n,s,q))\cong \Q_{s}\mathbf{u}_{q}(\mathfrak{sl}_{2})$ .

 \emph{(2)} Assume that $n=2^{m}l$ and $s=2^{m'}l'$ with $(l,2)=(l',2)=1$. If $m'<m$, then $D(A(n,s,q))$ is not twist equivalent to a Hopf algebra.
 \end{theorem}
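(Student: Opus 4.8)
The argument splits along the statement: part (1) is an explicit, if laborious, computation, while part (2) carries the new content. For part (1) I would first recall from \cite{HLY} (see also \cite{Ang}) the complete data of $A(n,s,q)$ as a quasi-Hopf algebra --- generators and relations, comultiplication, counit, the reassociator $\phi$ (which is concentrated on the group-like part, being dual to the $3$-cocycle governing associativity in $\Corep(M(n,s,q))$), and the quasi-antipode --- and then feed it into the Drinfeld-double construction for finite-dimensional quasi-Hopf algebras of Hausser--Nill and Schauenburg \cite{FN,Sch} (equivalently Majid \cite{Maj}). Since $A(n,s,q)$ is basic, being dual to the pointed Majid algebra $M(n,s,q)$, the double has a concrete presentation: a ``function'' copy $A(n,s,q)$, a ``group-algebra'' copy built from $A(n,s,q)^{*}$, a family of cross relations, a comultiplication, and a reassociator $\Phi$ still concentrated on the (now enlarged) group-like part. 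Matching these generators and relations with the definition of $\Q_{s}\mathbf{u}_{q}(\mathfrak{sl}_{2})$ given earlier in the paper proves (1); specializing to the situation where $\Phi$ becomes trivial recovers the isomorphism $D(A(n,q))\cong\mathbf{u}_{q}(\mathfrak{sl}_{2})$ of \cite{EG}.

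For part (2) I would use the following reduction. A finite-dimensional quasi-Hopf algebra $H$ is twist equivalent to a Hopf algebra if and only if its reassociator is a gauge coboundary, equivalently if and only if the forgetful functor $\Rep(H)\to\operatorname{Vec}$ can be promoted to a monoidal functor. Let $\Gamma$ be the group of one-dimensional $D(A(n,s,q))$-modules, i.e. of algebra maps $D(A(n,s,q))\to\k$ under convolution; this group is abelian because $\Rep(D(A(n,s,q)))$ is braided, and it is read off from the presentation in (1). The reassociator $\Phi$ of the double evaluates on $\Gamma^{3}$ to a $3$-cocycle $\omega\in Z^{3}(\Gamma,\k^{\times})$, $\omega(\chi_{1},\chi_{2},\chi_{3})=(\chi_{1}\otimes\chi_{2}\otimes\chi_{3})(\Phi)$, and the one-dimensional modules span a full tensor subcategory of $\Rep(D(A(n,s,q)))$ equivalent to $\operatorname{Vec}_{\Gamma}^{\omega}$ and closed under subquotients. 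Hence if $D(A(n,s,q))$ were twist equivalent to a Hopf algebra, the resulting monoidal functor on $\Rep(D(A(n,s,q)))$ would restrict to a monoidal functor $\operatorname{Vec}_{\Gamma}^{\omega}\to\operatorname{Vec}$; since any monoidal functor out of $\operatorname{Vec}_{\Gamma}^{\omega}$ must send each invertible object to a line, such a functor exists only when $[\omega]=0$ in $H^{3}(\Gamma,\k^{\times})$. It therefore suffices to prove that $[\omega]\neq 0$ as soon as $m'<m$.

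To complete this last step I would make $\Gamma$ and $\omega$ explicit using (1): $\Gamma$ is the extension of the cyclic group of group-likes inherited from $A(n,s,q)$ by the cyclic group of characters coming from the dual copy, and $\omega$ --- extracted from the Hausser--Nill formula for the reassociator of the double, which keeps $\Phi$ on the group-like part --- is a product of the standard cyclic $3$-cocycles on the two cyclic directions together with a ``mixed'' cocycle pairing them. Invoking the description of $H^{3}$ of such a group --- in the split case $H^{3}(\Z/a\times\Z/b,\k^{\times})\cong\Z/a\oplus\Z/b\oplus\Z/\gcd(a,b)$, and the analogue for the actual extension --- I would then show that $[\omega]$ has a nonzero component in the mixed summand, where all of the dependence on $s$ is located, precisely because $s$ is strictly less $2$-divisible than $n$; when $m'=m$ (in particular when $n$ is odd, recovering \cite{EG}) that component vanishes and the obstruction disappears.

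The step I expect to be the main obstacle is exactly this: the Hausser--Nill reassociator of the double is a product of several reassociated copies of $\phi$ and $\phi^{-1}$ spread across $A(n,s,q)$ and $A(n,s,q)^{*}$, so getting its restriction to $\Gamma^{3}$ into closed form, and then running the $2$-adic bookkeeping that isolates the hypothesis $m'<m$ and determines the order of $[\omega]$, is where the genuine work lies. A subtler point is that a twist trivializing the associator of $D(A(n,s,q))$ need not be supported on the group-like part at all; the restriction-to-$\operatorname{Vec}_{\Gamma}^{\omega}$ argument is precisely what circumvents this, and verifying that $\operatorname{Vec}_{\Gamma}^{\omega}$ really is a full tensor subcategory of $\Rep(D(A(n,s,q)))$ closed under subquotients is part of making the reduction rigorous.
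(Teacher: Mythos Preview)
Your high-level strategy for both parts matches the paper's: for (1) the paper carries out precisely the explicit Hausser--Nill/Schauenburg computation you describe and then writes down the obvious isomorphism; for (2) it too restricts to a tensor subcategory of invertible objects and shows the induced $3$-cocycle is cohomologically nontrivial.  Two simplifications, however, collapse your anticipated ``main obstacle'' and correct a wrong expectation about where the obstruction sits.  First, the reassociator of the double is \emph{not} a complicated Hausser--Nill expression to be unpacked: by Theorem~2.4(2) one has $\phi_{D}=\phi\bowtie\e$, i.e.\ it is literally the original $\phi_{s}$ of $A(n,s,q)$, which involves only the idempotents $1_{i}$ built from $g_{2}$.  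Second, your group $\Gamma$ of one-dimensional modules is cyclic, not a product or nontrivial extension of two cyclics: on any character the relation $yx-q^{s}xy=1-g_{1}g_{2}^{s}$ forces $g_{1}g_{2}^{s}\mapsto 1$, so a character is determined by its value on $g_{2}$, and combining $g_{2}^{n}=1$ with $g_{1}^{n}=g_{2}^{2s}$ (using $s\mid n$ and $m'<m$) yields $\Gamma\cong\Z/2s$.  There is therefore no ``mixed summand'' in $H^{3}(\Gamma,\k^{\times})$, and the restricted cocycle is simply the standard generator $(g^{i},g^{j},g^{k})\mapsto(-1)^{i[\frac{j+k}{2s}]}$ of $H^{3}(\Z/2s,\k^{\times})\cong\Z/2s$, which is visibly nonzero.

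The paper packages this by exhibiting one explicit character $X$ (sending $g_{1}\mapsto-1$, $g_{2}\mapsto(-1)^{1/s}$, $x,y\mapsto 0$) and working in the subcategory $\langle X\rangle$ it generates; since $X$ has order $2s$ this subcategory coincides with your $\operatorname{Vec}_{\Gamma}^{\omega}$, so the two arguments are the same once your picture of $\Gamma$ is corrected.  In short: your plan would succeed, but the computation is much shorter than you fear, and the $2$-adic bookkeeping you anticipate reduces to the single observation that $2s\mid n$ when $m'<m$, which is what makes the restricted $\phi_{s}$ equal to the canonical nontrivial cocycle on $\Z/2s$.
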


  The quasi-Hopf algebra $\Q_{s}\mathbf{u}_{q}(\mathfrak{sl}_{2})$ will be described in Section 2 by using generators and relations.
  All other preliminaries are also collected in this section.
  The first part of Theorem 1.1 will be proved in Section 3 and the method is  computation.
  The proof of the second part will be given in Section 4. The main idea of this section is to find
  some suitable representations of $\Q_{s}\mathbf{u}_{q}(\mathfrak{sl}_{2})$  such that they form
  a subtensor category of Rep-$\Q_{s}\mathbf{u}_{q}(\mathfrak{sl}_{2})$. By using group cohomologies,
  we will find that the restriction of the reassociator to this subtensor category is not trivial.

   Throughout, we work over an algebraically closed field $\kappa$ of characteristic $0$ and
  $[\frac{\bullet}{\bullet}]$ denote the Guassian fraction function. That is, for any natural numbers
  $a,b$, $[\frac{a}{b}]$ denotes the biggest integer which is not
  bigger than $\frac{a}{b}$.  About general background
   knowledge, the reader is referred to \cite{Dri} for
   quasi-Hopf algebras, to \cite{CE,kassel} for general theory about tensor categories,  and to \cite{HLY} for pointed Majid algebras.

\section{Preliminaries} In this section we recall the constructions of quasi-Hopf analogues of (generalized) Taft algebas,
their dualities and the Drinfeld double
of a quasi-Hopf algebra for the convenience of the reader. At last, we will introduce a new quasi-Hopf algebra $\Q_{s}\mathbf{u}_{q}(\mathfrak{sl}_{2})$.

\subsection{Path coalgebras and pointed Majid algebra $M(n,s,q)$.}
The main aim of this subsection is to recall the definition of the pointed Majid algebra $M(n,s,q)$ constructed in
\cite{HLY}. To attack it, the concept path coalgeba is needed.

A quiver is a quadruple $Q=(Q_0,Q_1,s,t),$ where $Q_0$ is the set of
vertices, $Q_1$ is the set of arrows, and $s,t:\ Q_1 \longrightarrow
Q_0$ are two maps assigning respectively the source and the target
for each arrow. A path of length $l \ge 1$ in the quiver $Q$ is a
finitely ordered sequence of $l$ arrows $a_l \cdots a_1$ such that
$s(a_{i+1})=t(a_i)$ for $1 \le i \le l-1.$ By convention a vertex is
said to be a trivial path of length $0.$
For a quiver $Q,$ the associated path coalgebra $\k Q$ is the
$\k$-space spanned by the set of paths with counit and
comultiplication maps defined by $\e(g)=1, \ \D(g)=g \otimes g$ for
each $g \in Q_0,$ and for each nontrivial path $p=a_n \cdots a_1, \
\e(p)=0,$
\begin{equation}
\D(a_n \cdots a_1)=p \otimes s(a_1) + \sum_{i=1}^{n-1}a_n \cdots
a_{i+1} \otimes a_i \cdots a_1 \nonumber + t(a_n) \otimes p \ .
\end{equation}
The length of paths gives a natural gradation to the path coalgebra.
Let $Q_n$ denote the set of paths of length $n$ in $Q,$ then $\k
Q=\oplus_{n \ge 0} \k Q_n$ and $\D(\c Q_n) \subseteq
\oplus_{n=i+j}\c Q_i \otimes \c Q_j.$ Clearly $\k Q$ is pointed with
the set of group-likes $G(\c Q)=Q_0,$ and has the following
coradical filtration $$ \k Q_0 \subseteq \k Q_0 \oplus \k Q_1
\subseteq \k Q_0 \oplus \k Q_1 \oplus \k Q_2 \subseteq \cdots.$$
Hence $\k Q$ is coradically graded.

A dual quasi-bialgebra, or Majid bialgebra for short, is a coalgebra
$(M,\D,\e)$ equipped with a compatible quasi-algebra structure.
Namely, there exist two coalgebra homomorphisms $$\M: H \otimes H
\to H, \ a \otimes b \mapsto ab, \quad \mu: \k \to H,\ \lambda
\mapsto \lambda 1_H$$ and a convolution-invertible map $\Phi:
H^{\otimes 3} \to k$ called reassociator, such that for all $a,b,c,d
\in H$ the following equalities hold:
\begin{gather}
a_{(1)}(b_{(1)}c_{(1)})\Phi(a_{(2)},b_{(2)},c_{(2)})=\Phi(a_{(1)},b_{(1)},c_{(1)})(a_{(2)} b_{(2)})c_{(2)},\\
1_Ha=a=a1_H, \\
\Phi(a_{(1)},b_{(1)},c_{(1)}d_{(1)})\Phi(a_{(2)}b_{(2)},c_{(2)},d_{(2)}) \\
 =\Phi(b_{(1)},c_{(1)},d_{(1)})\Phi(a_{(1)},b_{(2)}c_{(2)},d_{(2)})\Phi(a_{(3)},b_{(1)},c_{(3)}),\nonumber \\
\Phi(a,1_H,b)=\e(a)\e(b).
\end{gather}
Here and below we use the Sweedler sigma notation $\D(a)=a_{(1)} \otimes
a_{(1)}=a'\otimes a''$ for the comultiplication and $a_{(1)}\otimes a_{(2)}\otimes \cdots \otimes a_{(n+1)}$
 for the result of the $n$-iterated application of $\D$ on $a$. $H$ is called a Majid algebra if, moreover,
there exist a coalgebra antimorphism $S: H \to H$ and two
functionals $\alpha,\beta: H \to \k$ such that for all $a \in H,$
\begin{gather}
S(a_{(1)})\alpha(a_{(2)})a_{(3)}=\alpha(a)1_H, \quad
a_{(1)}\beta(a_{(2)})S(a_{(3)})=\beta(a)1_H, \\
\Phi(a_{(1)},S(a_{(3)}), a_{(5)})\beta(a_{(2)})\alpha(a_{(4)})= \\
\Phi^{-1}(S(a_{(1)}),a_{(3)},S(a_{(5)})) \alpha(a_{(2)})\beta(a_{(4)})=\e(a).
\nonumber
\end{gather}

A Majid algebra $H$ is said to be pointed, if the underlying
coalgebra is pointed. Now we consider a very simple quiver and hope to build a pointed Majid algebra structure on its path coalgebra.
The quiver being considered is the following one.
$$ \xy {\ar (0,0)*{\unit}; (30,-10)*{g}}; {\ar (-30,-10)*{g^{n-1}};
(0,0)*{\unit}}; {\ar (30,-10)*{g}; (3,-10)*{ \cdots  \ }}; {\ar
(-3,-10)*{\ \cdots  }; (-30,-10)*{g^{n-1}}}
\endxy $$

As in \cite{HLY}, this quiver is denoted by $Q(\mathbb{Z}_{n},g)$. Now let $0\leq s\leq n-1$ be a natural number which is a factor of $n$, i.e., $s|n$, $q$ an $n^{2}$-th primitive root of unity and $\q:=q^{n}$.
Let $p_{i}^{l}$ denote the path in $Q(\mathbb{Z}_{n},g)$ starting from $g^{i}$ with length $l$. So $p_{i}^{0}=g^{i}$.
Let $\Phi_{s}$ be the 3-cocycle over $\mathbb{Z}_{n}$ defined by
\begin{equation}\Phi_{s}(g^{i},g^{j},g^{k})=\q^{si[\frac{j+k}{n}]},\;\;\;0\leq i,j,k\leq  n-1.
\end{equation}
To define $M(n,s,q)$, the definition of the Guassian binomial coefficient
is needed. For any $\hbar \in \k$,
define $l_\hbar=1+\hbar+\cdots +\hbar^{l-1}$ and $l!_\hbar=1_\hbar
\cdots l_\hbar$. The Gaussian binomial coefficient is defined by
$\binom{l+m}{l}_\hbar:=\frac{(l+m)!_\hbar}{l!_\hbar m!_\hbar}$.

Now we can define the pointed Majid algebra $M(n,s,q)$. As a coalgebra, $M(n,s,q)=\oplus_{i< n[\frac{n}{s}]} \k Q(\mathbb{Z}_{n},g)_{i}$.
The reassociator, the multiplication, the functions $\alpha,\beta$ and the antipode are given through
\begin{gather}\Phi(p_i^{l},p_j^{m}, p_k^{t})=\delta_{lmt,0}\Phi_{s}(g^{i},g^{j},g^{k}),\\
p_i^{l}\cdot p_j^{m}=q^{-sjl}\q^{s(i+l)[\frac{m+j}{n}]}
\binom{l+m}{l}_{q^{-s}}p_{i+j}^{l+m},\\
\alpha(p_i^{l})=\delta_{l,0}\frac{1}{\Phi_{s}(g^{i},g^{n-i},g^{i})},\;\;\;\;\beta(p_i^{l})=\delta_{l,0}1,\\
S(g^{i})=g^{n-i},\;\;\;\;S(p_{0}^{1})=\q^{-s}p_{n-1}^{1},
\end{gather}
for $0\leq l,m,t< \frac{n^{2}}{s}$ and $0\leq i,j,k\leq n-1$, where $\delta_{a,b}$ is the Kroneck notation which equals to $1$ if $a=b$ and $0$ otherwise.
\begin{remark} \emph{To get simplicity, we change the multiplication formula defined in Corollary 3.9 of \cite{HLY} slightly into our formula (2.9). To recover the
original formula given in Corollary 3.9 of \cite{HLY} from (2.9), just substitute $q$ by $\q q$.  }
\end{remark}

\subsection{Quasi-Hopf algebra $A(n,s,q)$.}
  A quasi-bialgebra $(H,\M, \mu, \Delta, \varepsilon, \phi)$ is a
$\k$-algebra $(H,\M,\mu)$ with algebra morphisms $\Delta:\;H\to
H\otimes H$ (the comultiplication) and $\varepsilon:\; H\to
\k$ (the counit), and an invertible element $\phi\in H\otimes
H\otimes H$ (the reassociator), such that
\begin{gather}
(id\otimes \Delta)\Delta(a)\phi=\phi(\Delta\otimes id)\Delta(a),\;\;a\in H,\\
(id\otimes id\otimes\Delta)(\phi)(\Delta\otimes id\otimes id)(\phi)=(1\otimes \phi)
(id\otimes \Delta\otimes id)(\phi)(\phi\otimes 1),\\
(\varepsilon\otimes id)\Delta=id=(id\otimes \varepsilon)\Delta,\\
(id\otimes \varepsilon\otimes)(\phi)=1\otimes 1.
\end{gather}

We denote $\phi=\sum X^{i}\otimes Y^{i}\otimes Z^{i}$ and
$\phi^{-1}=\sum \overline{X}^{i}\otimes \overline{Y}^{i}\otimes
\overline{Z}^{i}$. Then a quasi-bialgebra $H$ is called a
quasi-Hopf algebra if there is a linear algebra antimorphism
$S:\;H\to H$ (the antipode) and elements
$\alpha,\beta\in H$ satisfying for all $a\in H$,
\begin{gather}
\sum S(a_{(1)})\alpha a_{(2)}=\alpha \varepsilon(a),\;\;\sum a_{(1)}\beta S(a_{(2)})=\beta \varepsilon(a),\\
\sum X^{i}\beta S(Y^{i})\alpha Z^{i}=1=\sum S(\overline{X^{i}})\alpha \overline{Y^{i}}\beta S(\overline{Z^{i}}).
\end{gather}

We call an invertible element $J\in H\otimes H$ is a \emph{twist} of
$H$ if it satisfies $(\varepsilon\otimes id)(J)=(id\otimes
\varepsilon)(J)=1$. For a twist $J=\sum f_{i}\otimes g_{i}$ with
inverse $J^{-1}=\sum \overline{f_{i}}\otimes \overline{g_{i}}$, set
$$\alpha_{J}:= \sum S(\overline{f_{i}})\alpha \overline{g_{i}},\;\;
\beta_{J}:= \sum f_{i}\beta S(g_{i}).$$ It is explained that given a
twist $J$ of $H$, if $\beta_{J}$ is invertible then one can
construct a new quasi-Hopf algebra structure
$H_{J}=(H,\Delta_{J},\varepsilon,\Phi_{J},S_{J},\beta_{J}\alpha_{J})$
on the algebra $H$, where
$$\Delta_{J}(a)=J\Delta(a)J^{-1},\;\;a\in H,$$
$$\Phi_{J}=(1\otimes J)(id\otimes \Delta)(J)(\Delta\otimes id)(J^{-1})(J\otimes 1)^{-1}$$
and
$$S_{J}(a)=\beta_{J}S(a)\beta_{J}^{-1},\;\;a\in H.$$

Next we will give the definition of the quasi-Hopf algebras $A(n,s,q)$, which will include the quasi-Hopf algebras $A(q)$ constructed by Gelaki
\cite{Gel} as special examples. The dualities of such quasi-Hopf algebras were constructed in \cite{HLY} and will be recalled in the next subsection.

Let $n$ be a positive integer, $\q$ an $n$-th primitive root of unity and $\k\mathbb{Z}_{n}$ the cyclic group algebra of order $n$. We denote a generator of
$\mathbb{Z}_{n}$ by $g_{2}$ and define
\begin{equation}1_{i}:=\frac{1}{n}\sum_{j=0}^{n-1}(\q^{n-i})^{j}g_{2}^{j}.\end{equation}

 For any $0\leq s\leq n-1$ which is a factor of $n$, i.e., $s|n$,  and $q$ an $n$-th primitive root of $\q$, the quasi-Hopf algebra $A(n,s,q)$ is defined as follows.
As an associative algebra, it is generated by $x,g_{2}$ and satisfies the following relations
\begin{gather}g_{2}^{n}=1,\;\;\;\; x^{\frac{n^{2}}{s}}=0,\;\;\;\;g_{2}x g^{-1}_{2}=\q x.
\end{gather}
The reassociator $\phi_{s}$, the comultiplication $\D$, the counit $\e$, the elements $\alpha,\beta$ and the antipode $S$
are given through
\begin{gather} \phi_{s}=\sum_{i,j,k=0}^{n-1}\q^{si[\frac{j+k}{n}]}1_{i}\otimes
1_{j}\otimes 1_{k},\\
\D(g_{2})=g_{2}\otimes g_{2},\;\;\D(x)=1\otimes\sum_{i=1}^{n-1}1_{i}x+g_{2}^{s}\otimes 1_{0}x+x\otimes \sum_{i=0}^{n-1}q^{-si}1_{i},\\
\alpha=g_{2}^{-s},\;\;\beta=1\\
S(g_{2})=g_{2}^{-1},\;\; S(x)=-x\sum_{i=0}^{n-1}q^{s(i-n)}1_{i}.
\end{gather}

\begin{lemma} The algebra $(A(n,s,q),\M,\mu, \D,\e,\phi_{s},S,\alpha,\beta)$ is a quasi-Hopf algebra and isomorphic to $(M(n,s,q)^{\ast},\D^{\ast},
\e^{\ast},\M^{\ast},\mu^{\ast},\Phi_{s}^{\ast},S^{\ast},\alpha^{\ast},\beta^{\ast})$.
\end{lemma}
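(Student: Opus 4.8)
The statement is, for the most part, a recollection from \cite{HLY}; beyond the change of convention noted in Remark 2.2, the one structural input is the standard duality between finite--dimensional Majid algebras and quasi-Hopf algebras. Concretely, for a finite--dimensional Majid algebra $(M,\D,\e,\M,\mu,\Phi,S,\alpha,\beta)$ the transposes $(\D^{\ast},\e^{\ast},\M^{\ast},\mu^{\ast},\Phi^{\ast},S^{\ast},\alpha^{\ast},\beta^{\ast})$ equip $M^{\ast}$ with a quasi-Hopf algebra structure --- this is the linear dual of the axioms (2.1)--(2.7) and is recorded in \cite{HLY}. Granting this, the plan is to exhibit an isomorphism of algebras $\psi\colon A(n,s,q)\To M(n,s,q)^{\ast}$ which carries $(\D,\e,\phi_{s},S,\alpha,\beta)$ of (2.19)--(2.24) to $(\M^{\ast},\mu^{\ast},\Phi_{s}^{\ast},S^{\ast},\alpha^{\ast},\beta^{\ast})$; the quasi-Hopf axioms for $A(n,s,q)$ then follow by transport along $\psi$.

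First I would define $\psi$ on generators. Let $\widehat{p_{i}^{l}}\in M(n,s,q)^{\ast}$ be dual to the path basis $\{p_{i}^{l}\}$, and set $\psi(1_{i})=\widehat{p_{i}^{0}}$ --- equivalently $\psi(g_{2})=\sum_{i=0}^{n-1}\q^{i}\widehat{p_{i}^{0}}$, which is consistent with $1_{i}=\tfrac1n\sum_{j}\q^{(n-i)j}g_{2}^{j}$ --- and $\psi(x)=\widehat{x}$, where $\widehat{x}(p_{i}^{l})=\delta_{l,1}$. Using the path--coalgebra comultiplication and the multiplication formula (2.9) of $M(n,s,q)$, one checks in $M(n,s,q)^{\ast}$ that these elements satisfy the relations (2.18): $\psi(g_{2})^{n}=1$ and $\psi(g_{2})\psi(x)\psi(g_{2})^{-1}=\q\,\psi(x)$ are short evaluations on paths, while $\psi(x)^{n^{2}/s}=0$ because $M(n,s,q)$ is concentrated in path--lengths $<n[\tfrac{n}{s}]=\tfrac{n^{2}}{s}$ (using $s\mid n$). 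Hence $\psi$ is a well--defined homomorphism of algebras.

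The next step is surjectivity together with a dimension count. From (2.9) one sees that $\psi(1_{i}x^{l})$ is a scalar multiple of some $\widehat{p_{j}^{l}}$, nonzero whenever the relevant Gaussian binomials $\binom{l+m}{l}_{q^{-s}}$ with $l+m<\tfrac{n^{2}}{s}$ do not vanish; and they do not, because $q$ is a primitive $n^{2}$-th root of unity and $s\mid n$, so $q^{-s}$ has order exactly $\tfrac{n^{2}}{s}$. As $i$ ranges over $\Z_{n}$ and $l$ over $0,\dots,\tfrac{n^{2}}{s}-1$, these vectors span $M(n,s,q)^{\ast}$, so $\psi$ is onto. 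Conversely the relations (2.18) show that $A(n,s,q)$ is spanned by the monomials $g_{2}^{a}x^{l}$ with $0\le a<n$, $0\le l<\tfrac{n^{2}}{s}$, whence $\dim A(n,s,q)\le\tfrac{n^{3}}{s}=n\cdot n[\tfrac{n}{s}]=\dim M(n,s,q)$ (again $[\tfrac{n}{s}]=\tfrac{n}{s}$). Therefore $\psi$ is an isomorphism of algebras.

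It remains to check that $\psi$ intertwines the remaining structure maps, and here most of the work is formal. The reassociator, $\alpha$ and $\beta$ come out at once from (2.8), (2.10) and the identity $\Phi_{s}(g^{i},g^{n-i},g^{i})=\q^{si}$ (since $[\tfrac{(n-i)+i}{n}]=1$): transporting $\Phi_{s}^{\ast},\alpha^{\ast},\beta^{\ast}$ through $\psi$ gives precisely $\phi_{s}$, $g_{2}^{-s}$ and $1$ as in (2.22)--(2.23), and $\D(g_{2})=g_{2}\otimes g_{2}$ is clear. The one genuine computation is the transpose of the multiplication on the generator $x$: evaluating $\widehat{x}(p_{i}^{l}\cdot p_{j}^{m})$ via (2.9), only the pairs $(l,m)=(1,0)$ and $(l,m)=(0,1)$ contribute, producing the coefficients $q^{-sj}$ and $\q^{si[\frac{j+1}{n}]}$ respectively, and sorting these into the parts supported on $1_{i}$ with $i\geq 1$, on $1_{0}$, and on $x$ reproduces exactly the three--term formula (2.20); dualizing (2.11) likewise yields (2.24). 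I expect this transposition of the multiplication with its Gaussian exponents (and of the antipode) to be the only real obstacle; once it is carried out, $\psi$ is an isomorphism of quasi-Hopf algebras, and in particular $(A(n,s,q),\M,\mu,\D,\e,\phi_{s},S,\alpha,\beta)$ is a quasi-Hopf algebra.
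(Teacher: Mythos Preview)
Your approach is essentially identical to the paper's: the paper also defines the map $\varphi\colon A(n,s,q)\to M(n,s,q)^{\ast}$ by $1_{i}\mapsto (p_{i}^{0})^{\ast}$ and $x\mapsto\sum_{j}(p_{j}^{1})^{\ast}$ (your $\widehat{x}$) and then declares that it is ``tedious to show'' $\varphi$ is a quasi-Hopf isomorphism, whereas you actually sketch the verifications (relations, dimension count, and the transpose of (2.9) recovering (2.20)). Your outline is correct and simply fills in detail the paper omits.
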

\begin{proof} One can show this through direct computations. For our purpose, it is better to establish a direct isomorphism between
$M(n,s,q)^{\ast}$ and $A(n,s,q)$. To attack it, we need give a dual basis of $M(n,s,q)$. Recall $\{p_{i}^{l}|0\leq i\leq n-1, \;0\leq l< n[\frac{n}{s}]\}$ is a basis
of $M(n,s,q)$. Let $\{(p_{i}^{l})^{\ast}|0\leq i\leq n-1, \;0\leq l< n[\frac{n}{s}]\}$ be the canonical dual basis of $M(n,s,q)$.
Define
\begin{equation}\varphi:\;A(n,s,q)\to M(n,s,q)^{\ast},\;\;1_{i}\mapsto (p_{i}^{0})^{\ast},\;x\mapsto \sum_{j=0}^{n-1}(p_{j}^{1})^{\ast},
\end{equation}
where $0\leq i\leq n-1$. It is tedious to show that $\varphi$ gives the desired isomorphism of quasi-Hopf algebras between $M(n,s,q)^{\ast}$ and $A(n,s,q)$.
\end{proof}

\begin{remark} \emph{(1) Take $s=1$ and the resulting quasi-Hopf algebra $A(n,1,q)$ is indeed isomorphic the quasi-Hopf algebra $A(q)$ constructed in \cite{Gel}. In this paper, $A(q)$ is
denoted by $A(n,q)$ for consistence. }

\emph{(2) By the definitions of $\Phi_{s}$ and $\phi_{s}$ defined in (2.7) and (2.20), it is easy to see the assumption that $s$ is factor of $n$ is not restrictive. Thus, throughout of this paper,
    we always take this assumption.}
\end{remark}

\subsection{Drinfeld double of a quasi-Hopf algebra.}
The construction of the Drinfeld double of a quasi-Hopf algebra is not, at least, a trivial generalization from the Hopf to quasi-Hopf
case. After all, the double of a Hopf algebra $H$ is modelled on $H\otimes H^{\ast}$, with $H$ and $H^{\ast}$ as subalgebras. But if $H$ is just a quasi-Hopf algebra, then
$H^{\ast}$ is not an associative algebra, so one is at a loss looking for an associative algebra structure on $H\otimes H^{\ast}$ and even expect that the double
should be some kind of hybrid object. Majid \cite{Maj} settled this problem at first. He gave a conceptual way to show that the double $D(H)$ is still a quasi-Hopf algebra.
Hausser and Nill \cite{FN} gave a computable realization of $D(H)$ on $H\otimes H^{\ast}$. A more explicit version was gotten by Schauenburg \cite{Sch}. Here we will recall the Schauenburg's
construction.

Let $(H,\M, \mu, \Delta, \varepsilon, \phi, S, \alpha, \beta)$ be a finite dimensional quasi-Hopf algebra. Assume $\phi=\phi^{(1)}\otimes
\phi^{(2)}\otimes \phi^{(3)}=\sum X^{i}\otimes Y^{i}\otimes Z^{i}$ and
$\phi^{-1}=\phi^{(-1)}\otimes
\phi^{(-2)}\otimes \phi^{(-3)}=\sum \overline{X}^{i}\otimes \overline{Y}^{i}\otimes
\overline{Z}^{i}$. Define
\begin{gather} \mathbf{\gamma}:=\sum (S(U^{i})\otimes S(T^{i}))(\alpha \otimes \alpha)(V^{i}\otimes W^{i}),\\
               \mathbf{f}:=\sum (S\otimes S)(\D^{op}(\overline{X}^{i}))\cdot \mathbf{\gamma} \cdot \D(\overline{Y}^{i}\beta S(\overline{Z}^{i})),\\
               \mathbf{\chi}:=(\phi \otimes 1)(\D\otimes id\otimes id)(\phi^{-1}),\\
               \mathbf{\omega}:=(1\otimes 1\otimes 1\otimes \tau(\mathbf{f}^{-1}))(id\otimes \D\otimes S\otimes S)(\mathbf{\chi})(\phi\otimes 1\otimes 1),
\end{gather}
where $(1\otimes \phi^{-1})(id\otimes id\otimes \D)(\phi)=\sum T^{i}\otimes U^{i}\otimes V^{i}\otimes W^{i}$ and $\tau$ is the twist, i.e., $\tau(a\otimes b)=b\otimes a$.

As a linear space, $D(H)=H\otimes H^{\ast}$ and we write $h\bowtie \psi:=h\otimes \psi\in D(H)$. There are two canonical actions, denoted by $\rightharpoonup,\;\leftharpoonup$, of $H$
on $H^{\ast}$. By definition, for any $a,b\in H$ and $\psi\in H^{\ast}$
$$\rightharpoonup:\;\;H\otimes H^{\ast}\To H^{\ast}, \;\;\;\;(a\rightharpoonup \psi)(b)=\psi(ba),$$
  $$\leftharpoonup:\;\;H^{\ast}\otimes H\To H^{\ast}, \;\;\;\;( \psi\leftharpoonup a)(b)=\psi(ab).$$
Define a map $\textbf{T}:\;H^{\ast}\to D(H)$ by
\begin{equation}\textbf{T}(\psi)=\phi^{(1)}_{(2)}\bi S(\phi^{(2)})\alpha \phi^{(3)}\rightharpoonup \psi\leftharpoonup \phi^{(1)}_{(1)}.
\end{equation}
With such preparations,  $D(H)$ can be described as the following form (see Theorems 6.3 and 9.3 in \cite{Sch}):

\begin{theorem} Let $H$ be a finite dimensional quasi-Hopf algebra. The quasi-Hopf structure on $D(H)=H\otimes H^{\ast}$, which contains $H$ as a subquasi-Hopf algebra through the embedding $h\hookrightarrow h\bi \e$, is determined by:

\emph{(1)} As an associative algebra, it is generated by $H$ and $\emph{\textbf{T}}(H^{\ast})$ and multiplication rule is
\begin{eqnarray*}&&(g\bi \varphi)(h\bi \psi)\\
&&=gh_{(1)(2)}\omega^{(3)}\bi (\omega^{(5)}\rightharpoonup \psi \leftharpoonup \omega^{(1)})(\omega^{(4)}S(h_{(2)})\rightharpoonup \varphi\leftharpoonup
h_{(1)(1)}\omega^{(2)}),\;\;\;\;(\star)
\end{eqnarray*}
and as a quasi-coalgebra, the comultiplication is given by
\begin{eqnarray*}\D_{D}(\mathbf{T}(\psi))&=&
 \tilde{\phi}^{(2)}\mathbf{T}(\psi_{(1)}\leftharpoonup \tilde{\phi}^{(1)})\phi^{(-1)}\phi^{(1)}\\
 &\otimes& \tilde{\phi}^{(3)}
\phi^{(-3)}\mathbf{T}(\phi^{(3)}\rightharpoonup \psi_{(2)}\leftharpoonup \phi^{-2})\phi^{(2)},\;\;\;\;(\star\star)\end{eqnarray*}
for $g,h\in H$ and $\varphi,\psi\in H^{\ast}$, where $\tilde{\phi}$ denote another copy of $\phi$.

\emph{(2)} The reassociator $\phi_{D}$,  the counit $\e_{D}$, elements $\alpha_{D},\beta_{D}$ and the antipode $S_{D}$ are given by
\begin{gather} \phi_{D}=\phi\bi \e,
\e_{D}(\mathbf{T}(\psi))=\psi(\phi^{(1)}S(\phi^{(2)})\alpha \phi^{(3)}),\\
\alpha_{D}=\alpha\bi \e,\;\;\;\;\beta_{D}=\beta\bi \e,\\
S_{D}(\mathbf{T}(\psi))=\mathbf{f}^{(2)}\mathbf{T}(\mathbf{f}^{(-2)}\rightharpoonup S^{-1}(\psi)\leftharpoonup \mathbf{f}^{(1)})\mathbf{f}^{(-1)},
\end{gather}
for $\psi\in H^{\ast}$.
\end{theorem}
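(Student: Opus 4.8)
The plan is to justify Schauenburg's explicit presentation by realizing $D(H)$ concretely on $H\otimes H^{\ast}$ as a \emph{diagonal crossed product}, rather than by an abstract reconstruction, since the goal is precisely to validate the closed formulas $(\star)$, $(\star\star)$, (2.30)--(2.32). The conceptual guide is that $\Rep(D(H))$ should be the Drinfeld centre $\mathcal Z(\Rep H)$, equivalently the category ${}^{H}_{H}\mathcal{YD}$ of Yetter--Drinfeld modules: an object is an $H$-module $V$ equipped with a half-braiding, and on a free module the half-braiding is encoded by a map $H\to \operatorname{End}(V)$, i.e.\ by an element of $H^{\ast}$ acted upon from both sides by $H$. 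This is what motivates the two actions $\rightharpoonup,\leftharpoonup$, the embedding $\mathbf T$ of (2.29), and the coherence elements $\gamma,\mathbf f,\chi,\omega$ of (2.25)--(2.28). I would keep this picture only as a consistency check and carry out the verification on the nose.

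First the algebra. I would take $(\star)$ as the definition of the product on $H\otimes H^{\ast}$, observe that with $\psi=\e$ it restricts to the product of $H$, so that $h\hookrightarrow h\bi\e$ is an algebra embedding, and that $\mathbf T(\psi)=\phi^{(1)}_{(2)}\bi S(\phi^{(2)})\alpha\phi^{(3)}\rightharpoonup\psi\leftharpoonup\phi^{(1)}_{(1)}$ supplies the second generating set; the ``straightening relation'' moving $\mathbf T(\psi)$ past $h\bi\e$ is then the special case of $(\star)$ realizing the two-sided $H$-action twisted by $\omega$. The substantial point is associativity of $(\star)$: comparing the two bracketings of $(g\bi\varphi)(h\bi\psi)(k\bi\theta)$ reduces to a single identity on the coherence element $\omega$, and that identity is forced by the pentagon (the $3$-cocycle condition (2.13) on $\phi$) together with the defining formula $\chi=(\phi\otimes 1)(\D\otimes id\otimes id)(\phi^{-1})$ of (2.27) and the twisting by $\tau(\mathbf f^{-1})$ in $\omega$. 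This is essentially a ``$2$-cocycle out of a $3$-cocycle'' computation and is the most laborious part of the algebra half.

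For the quasi-bialgebra structure I would define $\D_D$ to agree with $\D$ on $H$ and with $(\star\star)$ on $\mathbf T(H^{\ast})$, extend it multiplicatively using $(\star)$, and set $\phi_D=\phi\bi\e$ with $\e_D(\mathbf T(\psi))=\psi(\phi^{(1)}S(\phi^{(2)})\alpha\phi^{(3)})$ as in (2.30). Three things must be checked: that $\D_D$ is well defined and multiplicative (compatibility of $(\star\star)$ with $(\star)$), that $\e_D$ is a counit, and above all that $\D_D$ is quasi-coassociative with reassociator $\phi_D$, i.e.\ $(id\otimes\D_D)\D_D(a)\,\phi_D=\phi_D\,(\D_D\otimes id)\D_D(a)$. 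On $H$ this is just the quasi-bialgebra axiom of $H$; on $\mathbf T(\psi)$ the extra copies of $\phi$ appearing in $(\star\star)$ (the factors $\phi^{(-1)}\phi^{(1)}$, the factors $\phi^{(-3)}\dots\phi^{(2)}$ and the auxiliary $\tilde\phi$) are arranged exactly so that the two iterated coproducts differ by conjugation by $\phi_D$, and the verification again rests on the pentagon for $\phi$. The pentagon for $\phi_D$ itself is immediate, since $\phi_D=\phi\bi\e$ lies inside the subalgebra $H$.

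Finally the antipode. With $\alpha_D=\alpha\bi\e$, $\beta_D=\beta\bi\e$ of (2.31) and $S_D$ given by (2.32), the two antipode axioms (2.16)--(2.17) must be verified on both generating sets; on $H$ they reduce to the corresponding axioms for $(S,\alpha,\beta)$. On $\mathbf T(\psi)$ the key input is the Drinfeld gauge identity $\mathbf f\,\D(S(a))\,\mathbf f^{-1}=(S\otimes S)(\D^{op}(a))$ for the element $\mathbf f$ of (2.26); this is exactly why $\mathbf f$ conjugates the formula for $S_D$, and it also entails that $S$ is bijective in the finite-dimensional case, so that $S^{-1}(\psi)$ in (2.32) is meaningful. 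I expect the main obstacle to be the quasi-coassociativity of $\D_D$ on $\mathbf T(\psi)$ together with the antipode axiom for $\mathbf T(\psi)$: both require tracking several independent copies of $\phi$ through the actions $\rightharpoonup,\leftharpoonup$ and invoking the pentagon in a nonobvious bracketing, and it is here that the precise definitions of $\chi$, $\omega$, $\gamma$ and $\mathbf f$ are indispensable. The remaining assertions are either inherited from $H$ through the embedding $h\hookrightarrow h\bi\e$ or follow by direct but routine substitution.
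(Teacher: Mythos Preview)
The paper does not prove this theorem at all: it is stated as a known result, introduced with ``see Theorems 6.3 and 9.3 in \cite{Sch}'', and the paper's only contribution here is Remark 2.5 flagging misprints in the literature and recording the corrected formulas (2.26), (2.27) and $(\star\star)$. So there is no proof in the paper to compare your proposal against.

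Your outline is a reasonable sketch of how Schauenburg's construction is verified---defining the product by $(\star)$, reducing associativity to a cocycle-type identity on $\omega$ forced by the pentagon for $\phi$, then checking quasi-coassociativity of $\D_D$ on $\mathbf T(\psi)$ and the antipode axioms using the Drinfeld gauge identity for $\mathbf f$. For the purposes of this paper, however, all of this is imported wholesale from \cite{Sch} (with \cite{Maj,FN} as background), and you would be expected simply to cite those sources rather than reprove the general double. If you do want to include a self-contained verification, be aware that the associativity check for $(\star)$ and the quasi-coassociativity check for $(\star\star)$ are each substantial multi-page computations in the original references; your proposal correctly identifies where the difficulty lies but does not actually carry out either step.
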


\begin{remark} \emph{(1) By formula $(\star)$, $1\bi \e$ is the unit element of $D(H)$. Moreover, as a special case of this formula, we also have}
\begin{equation}(1\bi \varphi)(h\bi \e)=h_{(1)(2)}\bi S(h_{(2)})\rightharpoonup \varphi\leftharpoonup h_{(1)(1)},
\end{equation}
\emph{for $h\in H$ and $\varphi\in H^{\ast}$.}

 \emph{(2)  In the process of our computations, we find that there are some misprints in \cite{Sch} and \cite{FN}. Especially, there are misprints in the expression of the element $\mathbf{f}$ given both in \cite{Sch} and \cite{FN}, the element $\chi$ given in \cite{Sch} and the comultiplication formula given in \cite{Sch}. The correct versions are (2.26), (2.27) and ($\star\star$).}
\end{remark}

\subsection{The quasi-Hopf algebra $\Q_{s}\mathbf{u}_{q}(\mathfrak{sl}_{2})$. }

    The quasi-Hopf algebra $\Q_{s}\mathbf{u}_{q}(\mathfrak{sl}_{2})$ is defined as follows. As an associative algebra, it is generated by four
    elements $g_{1},g_{2},x,y$ satisfying
    \begin{gather} g_{1}^{n}=g_{2}^{2s},\;\;g_{2}^{n}=1, \;\;g_{1}g_{2}=g_{2}g_{1},\;\;x^{\frac{n^{2}}{s}}=y^{\frac{n^{2}}{s}}=0,\\
    g_{1}x g_{1}^{-1}=\q^{-s}q^{2s}x,\;\;\;\;g_{2}x g_{2}^{-1}=\q x,\\
    g_{1}y g_{1}^{-1}=\q^{s}q^{-2s}y,\;\;\;\;g_{2}y g_{2}^{-1}=\q^{-1} y,\\
    yx-q^{s}xy=1-g_{1}g_{2}^{s}.
    \end{gather}
    Define
    \begin{equation}1_{i}:=\frac{1}{n}\sum_{j=0}^{n-1}(\q^{n-i})^{j}g_{2}^{j}.\end{equation}
    The reassociator $\phi_{s}$, the comultiplication $\D$, the counit $\e$, the elements $\alpha,\beta$ and the antipode $S$
    are given through
    \begin{gather} \phi_{s}=\sum_{i,j,k=0}^{n-1}\q^{si[\frac{j+k}{n}]}1_{i}\otimes
    1_{j}\otimes 1_{k},\\
    \D(g_{1})=g_{1}\otimes g_{1},\;\;\;\;\D(g_{2})=g_{2}\otimes g_{2},\\
    \D(x)=1\otimes\sum_{i=1}^{n-1}1_{i}x+g_{2}^{s}\otimes 1_{0}x+x\otimes \sum_{i=0}^{n-1}q^{-si}1_{i},\\
    \D(y)=y\otimes\sum_{i=0}^{n-1}q^{si}1_{i}+g_{1}g_{2}^{s}\otimes y\sum_{i=1}^{n-1}1_{i}+g_{1}\otimes y1_{0},\\
    \e(g_{1})=\e(g_{2})=1,\;\;\;\;\e(x)=\e(y)=0,\\
    \alpha=g_{2}^{-s},\;\;\;\;\beta=1\\
    S(g_{1})=g_{1}^{-1},\;\;\;\;S(g_{2})=g_{2}^{-1},\\
    S(x)=-x\sum_{i=0}^{n-1}q^{s(i-n)}1_{i},\;\;S(y)=-g_{1}^{-1}g_{2}^{-s}y\sum_{i=0}^{n-1}q^{s(n-i)'}1_{i}.
    \end{gather} where for any integer $i\in \mathbb{N}$, we denote by $i'$ the remainder of division of $i$ by $n$.

\begin{lemma} $\Q_{s}\mathbf{u}_{q}(\mathfrak{sl}_{2})$ is a quasi-Hopf algebra.
\end{lemma}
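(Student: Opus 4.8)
The plan is to realize $\Q_{s}\mathbf{u}_{q}(\mathfrak{sl}_{2})$ as the Drinfeld double $D(A(n,s,q))$. Since $A(n,s,q)$ is a finite dimensional quasi-Hopf algebra (established above), Schauenburg's theorem recalled in this section already endows $D(A(n,s,q))$ with a quasi-Hopf structure, with structure maps given by $(\star)$, $(\star\star)$ and the accompanying formulas. Hence it suffices to produce an isomorphism of quasi-Hopf algebras $\Q_{s}\mathbf{u}_{q}(\mathfrak{sl}_{2})\cong D(A(n,s,q))$; this is exactly the content of Theorem 1.1(1), whose proof will be carried out in Section 3, and the present lemma follows from it.

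Concretely I would proceed as follows. Using the embedding $h\hookrightarrow h\bi\e$ of $A(n,s,q)$ into $D(A(n,s,q))$, the elements $g_{2}\bi\e$ and $x\bi\e$ reproduce inside $D(A(n,s,q))$ the defining relations $g_{2}^{n}=1$, $x^{n^{2}/s}=0$ and $g_{2}xg_{2}^{-1}=\q x$ of $A(n,s,q)$. Next, after the identification $A(n,s,q)^{\ast}\cong M(n,s,q)$, I would take for $g_{1}$ and $y$ suitable scalar multiples of $\mathbf{T}(g)$ and $\mathbf{T}(p_{0}^{1})$, where $g$ is the grouplike element of $M(n,s,q)$ and $p_{0}^{1}$ is the length-one path. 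Feeding the explicit reassociator $\phi_{s}=\sum\q^{si[\frac{j+k}{n}]}1_{i}\otimes 1_{j}\otimes 1_{k}$ into the elements $\omega$ and $\mathbf{f}$ and then into the multiplication rule $(\star)$, I would verify that $g_{1},g_{2},x,y$ satisfy exactly the defining relations of $\Q_{s}\mathbf{u}_{q}(\mathfrak{sl}_{2})$: the quasi-commutation relations for $g_{1}$ with $x$ and $y$ come from the action of the idempotents $1_{i}$ on $\mathbf{T}(M(n,s,q))$, while the relation $yx-q^{s}xy=1-g_{1}g_{2}^{s}$ emerges from the $\omega$-twisted product of $\mathbf{T}(p_{0}^{1})$ with $x$. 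A dimension count then closes the argument: $\dim A(n,s,q)=n^{3}/s$, so $\dim D(A(n,s,q))=n^{6}/s^{2}$, which is exactly the dimension of the algebra presented by the relations of $\Q_{s}\mathbf{u}_{q}(\mathfrak{sl}_{2})$ (it admits the PBW-type basis $\{g_{1}^{a}g_{2}^{b}x^{c}y^{d}\}$), so the algebra map $\Q_{s}\mathbf{u}_{q}(\mathfrak{sl}_{2})\to D(A(n,s,q))$ determined by the above assignment is an isomorphism. Finally I would transport the comultiplication, counit, reassociator, $\alpha$, $\beta$ and antipode of $D(A(n,s,q))$ through this isomorphism and check, using $(\star\star)$ for $\D(y)$ and the formula for $S_{D}$ for $S(y)$, that they coincide with the data attached to $\Q_{s}\mathbf{u}_{q}(\mathfrak{sl}_{2})$ above.

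The main obstacle will be this last matching computation together with the extraction of the relations: the elements $\omega$ and $\mathbf{f}$ are assembled from $\phi_{s}$ through a long chain of coproducts, antipodes and twists, so isolating the clean relations for $g_{1}$, the commutator $yx-q^{s}xy=1-g_{1}g_{2}^{s}$, and the correct forms of $\D(y)$ and $S(y)$, requires careful bookkeeping of the powers of $q$ and $\q$ and of products of the orthogonal idempotents $1_{i}$. If one prefers an argument that does not rely on Section 3, the alternative is to check the quasi-Hopf axioms for $\Q_{s}\mathbf{u}_{q}(\mathfrak{sl}_{2})$ directly: invertibility and the pentagon identity for $\phi_{s}$ are immediate because the scalars $\q^{si[\frac{j+k}{n}]}$ form a $3$-cocycle on $\mathbb{Z}_{n}$ and the $1_{i}$ are orthogonal idempotents summing to $1$, the counit axioms are trivial, and the antipode axioms reduce to short computations on the generators; there the only genuinely delicate point is verifying that $\D$ is multiplicative on the relation $yx-q^{s}xy=1-g_{1}g_{2}^{s}$.
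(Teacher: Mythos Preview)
Your proposal is essentially the same approach as the paper's: defer the lemma to the isomorphism $\Q_{s}\mathbf{u}_{q}(\mathfrak{sl}_{2})\cong D(A(n,s,q))$ established in Section~3, and note that a direct verification is also possible, with the only nontrivial check being the multiplicativity of $\D$ on $yx-q^{s}xy=1-g_{1}g_{2}^{s}$; the paper in fact carries out precisely this last computation in the body of the proof. One small correction: the images of $g_{1}$ and $y$ are not scalar multiples of $\mathbf{T}(g)$ and $\mathbf{T}(p_{0}^{1})$ but rather their products with the element $\sum_{i}q^{si}1_{i}$, so your description of the assignment should be adjusted accordingly.
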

 \begin{proof} We will show that $D(A(n,s,q))$ is isomorphic to $\Q_{s}\mathbf{u}_{q}(\mathfrak{sl}_{2})$ and thus $\Q_{s}\mathbf{u}_{q}(\mathfrak{sl}_{2})$ is a quasi-Hopf
 algebra. Moreover, it is quasitriangular. Of course, one can show the result by direct computations. Here we only check the equality $\D(y)\D(x)-q^{s}\D(x)\D(y)=\D(1)-\D(g_{1})\D(g_{2}^{s})$.
 Indeed,
 \begin{eqnarray*}\D(y)\D(x)&=&(y\otimes\sum_{i=0}^{n-1}q^{si}1_{i}+g_{1}g_{2}^{s}\otimes y\sum_{i=1}^{n-1}1_{i}+g_{1}\otimes y1_{0})\cdot\\
 &&(1\otimes\sum_{i=1}^{n-1}1_{i}x+g_{2}^{s}\otimes 1_{0}x+x\otimes \sum_{i=0}^{n-1}q^{-si}1_{i})\\
 &=&y\otimes \sum_{i=1}^{n-1}q^{si}1_{i}x+yg_{2}^{s}\otimes 1_{0}x+yx\otimes 1+ g_{1}g_{2}^{s}\otimes y\sum_{i=1}^{n-1}1_{i}x\\
 &&+g_{1}g_{2}^{s} x\otimes y\sum_{i=1}^{n-1}q^{-si}1_{i}+g_{1}g_{2}^{s}\otimes y1_{0}x+g_{1}x\otimes y1_{0}\\
 &=&y\otimes \sum_{i=1}^{n-1}q^{si}1_{i}x+yg_{2}^{s}\otimes 1_{0}x+yx\otimes 1+g_{1}g_{2}^{s}\otimes yx\\
 &&+g_{1}g_{2}^{s} x\otimes y\sum_{i=1}^{n-1}q^{-si}1_{i}+g_{1}x\otimes y1_{0},
  \end{eqnarray*}
 and
 \begin{eqnarray*}q^{s}\D(x)\D(y)&=&q^{s}(1\otimes\sum_{i=1}^{n-1}1_{i}x+g_{2}^{s}\otimes 1_{0}x+x\otimes \sum_{i=0}^{n-1}q^{-si}1_{i})\cdot\\
 &&(y\otimes\sum_{i=0}^{n-1}q^{si}1_{i}+g_{1}g_{2}^{s}\otimes y\sum_{i=1}^{n-1}1_{i}+g_{1}\otimes y1_{0})\\
 &=&q^{s}[y\otimes \sum_{i=1}^{n-1}q^{s(i-1)}1_{i}x+g_{1}g_{2}^{s}\otimes xy\sum_{i=1}^{n-1}1_{i}+g_{2}^{s}y\otimes q^{s(n-1)}1_{0}x\\
 &&+g_{1}g_{2}^{s}\otimes xy1_{0}+xy\otimes 1+xg_{1}g_{2}^{s}\otimes y\sum_{i=1}^{n-1}q^{-s(i-1)}1_{i}\\
 &&+ xg_{1}\otimes q^{-s(n-1)}y1_{0}]\\
 &=&q^{s}[y\otimes \sum_{i=1}^{n-1}q^{s(i-1)}1_{i}x+g_{1}g_{2}^{s}\otimes xy+q^{-s}yg_{2}^{s}\otimes 1_{0}x\\
 && +xy\otimes 1+q^{-s}g_{1}g_{2}^{s}x\otimes y\sum_{i=1}^{n-1}q^{-si}1_{i}+q^{-s}g_{1}x\otimes y1_{0}].
  \end{eqnarray*}
Therefore,
  \begin{eqnarray*}\D(y)\D(x)-q^{s}\D(x)\D(y)&=&(yx-q^{s}xy)\otimes 1 + g_{1}g_{2}^{s}\otimes (yx-q^{s}xy)\\
  &=&(1-g_{1}g_{2}^{s})\otimes 1+g_{1}g_{2}^{s}\otimes (1-g_{1}g_{2}^{s})\\
  &=&1\otimes 1 -g_{1}g_{2}^{s} \otimes g_{1}g_{2}^{s}\\
  &=&\D(1)-\D(g_{1})\D(g_{2}^{s}).
  \end{eqnarray*}
 \end{proof}

\section{The Drinfeld double of $A(n,s,q)$}

The main result of this section is the following result.
\begin{theorem} The Drinfeld double $D(A(n,s,q))$ of $A(n,s,q)$ is isomorphic to the quasi-Hopf algebra $\Q_{s}\mathbf{u}_{q}(\mathfrak{sl}_{2})$. That is,
$$D(A(n,s,q))\cong \Q_{s}\mathbf{u}_{q}(\mathfrak{sl}_{2}).$$
\end{theorem}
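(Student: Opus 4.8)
The plan is to construct an explicit isomorphism $\Psi\colon \Q_{s}\mathbf{u}_{q}(\mathfrak{sl}_{2})\to D(A(n,s,q))$ and verify it is an isomorphism of quasi-Hopf algebras. Since $A(n,s,q)$ sits inside $D(A(n,s,q))$ via $h\hookrightarrow h\bi\e$ (Theorem 2.3), the generators $g_{2}$ and $x$ of $A(n,s,q)$ provide natural candidates for the images of $g_{2}$ and $x$ in $\Q_{s}\mathbf{u}_{q}(\mathfrak{sl}_{2})$; concretely I set $\Psi(g_{2})=g_{2}\bi\e$ and $\Psi(x)=x\bi\e$. The remaining generators $g_{1},y$ must come from the dual part $\mathbf{T}(A(n,s,q)^{\ast})$. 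By Lemma 2.1 we may identify $A(n,s,q)^{\ast}\cong M(n,s,q)$ as coalgebras via $\varphi$, so the group-likes $g^{i}\in M(n,s,q)$ and the skew-primitive $p_{0}^{1}$ give the natural dual-side elements: I would set $\Psi(g_{1})=\mathbf{T}(\lambda)$ for a suitable grouplike functional $\lambda$ (roughly the one dual to $g$, adjusted by the factors $S(\phi^{(2)})\alpha\phi^{(3)}$ forced by the definition (2.32) of $\mathbf{T}$), and $\Psi(y)=\mathbf{T}(\eta)$ where $\eta$ corresponds to $p_{0}^{1}$ up to a scalar and a grouplike twist to be pinned down.

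Once the assignment is fixed, the verification splits into three parts. First, \textbf{well-definedness}: one checks that $\Psi$ respects the defining relations (2.36)--(2.39). This is where the Drinfeld double multiplication rule $(\star)$ of Theorem 2.3 does the real work --- the relations $g_{1}xg_{1}^{-1}=\q^{-s}q^{2s}x$, $g_{1}yg_{1}^{-1}=\q^{s}q^{-2s}y$ and especially the $q$-commutator $yx-q^{s}xy=1-g_{1}g_{2}^{s}$ should fall out of $(\star)$ applied to the pairs $(\mathbf{T}(\eta),x\bi\e)$ and $(x\bi\e,\mathbf{T}(\eta))$, using the explicit reassociator $\phi_{s}$ of (2.20) to compute the element $\omega$ from (2.25). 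The relation $g_{1}^{n}=g_{2}^{2s}$ reflects the nontriviality of the $3$-cocycle $\Phi_{s}$: the $n$-th power of the grouplike $\mathbf{T}(\lambda)$ is not $1\bi\e$ but picks up precisely the scalar/grouplike correction recorded by $\phi_{s}$, and this is the computation that distinguishes the quasi-Hopf case from the Hopf case. Second, \textbf{bijectivity}: since $\Q_{s}\mathbf{u}_{q}(\mathfrak{sl}_{2})$ has a PBW-type basis $\{g_{1}^{a}g_{2}^{b}x^{c}y^{d}\}$ of dimension $n^{2}(n^{2}/s)^{2}=\dim D(A(n,s,q))$, it suffices to show $\Psi$ is surjective, i.e.\ that $g_{1},g_{2},x,y$ generate; this follows because $g_{2},x$ already generate the embedded $A(n,s,q)$ and $\mathbf{T}(A(n,s,q)^{\ast})$ is generated as an algebra over $A(n,s,q)$ by $\mathbf{T}$ of the grouplikes and the skew-primitives of $M(n,s,q)$, which are hit by $g_{1}$ and $y$. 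Third, \textbf{compatibility with the quasi-Hopf structure}: one checks $\Psi$ intertwines the comultiplications (using $(\star\star)$ against (2.43)--(2.45)), the counits, the reassociators $\phi_{D}=\phi_{s}\bi\e$ versus $\phi_{s}$ of (2.41), the antipodes (via (2.34) against (2.47)--(2.48)), and the elements $\alpha_{D}=\alpha\bi\e$, $\beta_{D}=\beta\bi\e$ against $\alpha=g_{2}^{-s},\beta=1$.

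For the comultiplication of $y$, I would compute $\D_{D}(\mathbf{T}(\eta))$ from $(\star\star)$, with $\eta$ the functional coming from $p_{0}^{1}\in M(n,s,q)$: the two tensor factors $\tilde\phi^{(2)}\mathbf{T}(\eta_{(1)}\leftharpoonup\tilde\phi^{(1)})\phi^{(-1)}\phi^{(1)}$ and $\tilde\phi^{(3)}\phi^{(-3)}\mathbf{T}(\phi^{(3)}\rightharpoonup\eta_{(2)}\leftharpoonup\phi^{-2})\phi^{(2)}$, after inserting the coproduct $\D(p_{0}^{1})=p_{0}^{1}\otimes s(p_{0}^{1})+t(p_{0}^{1})\otimes p_{0}^{1}$ dualized and the idempotent decomposition $1=\sum_{i}1_{i}$, should reproduce exactly $\D(y)=y\otimes\sum q^{si}1_{i}+g_{1}g_{2}^{s}\otimes y\sum_{i\geq1}1_{i}+g_{1}\otimes y1_{0}$ of (2.45), the $g_{1}$'s appearing as the grouplike part of $\mathbf{T}(\eta)$ and the $1_{i}$-weights coming from the reassociator legs $\phi^{(2)},\phi^{(3)}$ acting on the right via $\leftharpoonup,\rightharpoonup$. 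The main obstacle, and the bulk of the section, is precisely these explicit evaluations of $(\star)$ and $(\star\star)$: they require carefully tracking the many $\phi_{s}$-, $\omega$- and $\mathbf{f}$-legs through the actions $\rightharpoonup,\leftharpoonup$ on the dual basis $\{(p_{i}^{l})^{\ast}\}$, and it is here that the correct (de-misprinted) formulas (2.26), (2.27), $(\star\star)$ flagged in Remark 2.5 are essential --- a sign or scalar error in $\mathbf{f}$ or $\chi$ would corrupt the antipode and coproduct checks. Everything else is bookkeeping against the PBW basis.
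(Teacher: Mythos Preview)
Your strategy is exactly the paper's: define $\Psi$ on generators, verify the relations via the multiplication formula $(\star)$ after computing $\omega$ explicitly (this is the content of Lemma~3.3 and Proposition~3.4), check the quasi-coalgebra structure via $(\star\star)$ (Proposition~3.5), and conclude bijectivity by a dimension count. The paper's map is
\[
\Psi:\quad g_{1}\mapsto \sum_{i=0}^{n-1}q^{si}1_{i}\bi g,\qquad g_{2}\mapsto g_{2}\bi\e,\qquad x\mapsto x\bi\e,\qquad y\mapsto \sum_{i=0}^{n-1}q^{si}1_{i}\bi p_{0}^{1}.
\]

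The one place your ansatz needs correcting is the form of $\Psi(g_{1})$ and $\Psi(y)$: they are \emph{not} of the shape $\mathbf{T}(\lambda)$, $\mathbf{T}(\eta)$ for any functionals $\lambda,\eta$. The paper computes $\mathbf{T}(g)=g_{2}^{s}\bi g$ and $\mathbf{T}(p_{0}^{1})=1\bi p_{0}^{1}$, and then shows (equation~(3.24)) that
\[
\D_{D}(\mathbf{T}(g))=\sum_{i,j=0}^{n-1}\q^{\,s[\frac{i+j}{n}]}(1_{i}\otimes 1_{j})\bigl(\mathbf{T}(g)\otimes\mathbf{T}(g)\bigr),
\]
which is \emph{not} $\mathbf{T}(g)\otimes\mathbf{T}(g)$: the raw element $\mathbf{T}(g)$ fails to be grouplike in $D(A(n,s,q))$. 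The remedy is to multiply on the left by the $A(n,s,q)$-element $\sum_{i}q^{si}1_{i}$ (together with $g_{2}^{-s}$ to pass from $\mathbf{T}(g)$ to $1\bi g$); this absorbs the cocycle factor $\q^{\,s[\frac{i+j}{n}]}$ and makes $\sum_{i}q^{si}1_{i}\bi g$ genuinely grouplike. The same correction is applied to $\mathbf{T}(p_{0}^{1})$ to obtain the image of $y$. So the ``adjustment'' you anticipate is not internal to the argument of $\mathbf{T}$ but is an external multiplication by an element of the embedded sub-quasi-Hopf algebra $A(n,s,q)$; without it, the relation $g_{1}^{n}=g_{2}^{2s}$ (cf.\ (3.7), (3.11)) and the grouplike coproduct for $g_{1}$ both fail. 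Once this correction is in place, the rest of your outline goes through exactly as you describe.
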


To show it, we need understand $D(A(n,s,q))$ well.
Recall that for any integer $i\in \mathbb{N}$, we denote by $i'$ the remainder of division of $i$ by $n$. The following
lemma is useful in our computations.

\begin{lemma} For any two natural numbers $i,j$, we always have
\begin{equation}[\frac{i+j'}{n}]=[\frac{i+j}{n}]-[\frac{j}{n}].
\end{equation}\end{lemma}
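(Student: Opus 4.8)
The plan is to reduce everything to the elementary fact that the Gaussian floor function satisfies $[\,x+m\,]=[\,x\,]+m$ for any real $x$ and any integer $m$. So first I would invoke the division algorithm to write $j=\lfloor j/n\rfloor\, n+j'$, which is precisely the defining relation of the remainder $j'$: we have $j=[\frac{j}{n}]\,n+j'$ with $0\le j'\le n-1$.

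Next I would substitute this into the left-hand fraction $\frac{i+j}{n}$. Dividing through by $n$ gives
\[
\frac{i+j}{n}=\frac{i+j'}{n}+\left[\frac{j}{n}\right].
\]
Since $[\frac{j}{n}]$ is an integer, applying the shift property of the floor function yields
\[
\left[\frac{i+j}{n}\right]=\left[\frac{i+j'}{n}\right]+\left[\frac{j}{n}\right],
\]
and rearranging gives exactly the claimed identity \eqref{}. For completeness I would also note that $i$ need not be reduced modulo $n$ for this argument to work; only $j$ is replaced by its remainder, and the identity holds for all natural numbers $i,j$ as stated.

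There is no real obstacle here: the only thing to be careful about is the convention for the floor function (here $[\frac{a}{b}]$ is the largest integer not exceeding $\frac{a}{b}$, as fixed in the introduction) and the fact that $j'$ is a genuine remainder in $\{0,1,\dots,n-1\}$, so that the decomposition $j=[\frac{j}{n}]n+j'$ is valid. Everything else is a one-line computation, and the lemma is then ready to be used repeatedly when manipulating the exponents $\q^{si[\frac{j+k}{n}]}$ appearing in the reassociator $\phi_s$ during the double computation.
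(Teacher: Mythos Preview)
Your proof is correct and is essentially the same as the paper's: both use the decomposition $j=[\frac{j}{n}]n+j'$ and the integer-shift property $[x+m]=[x]+m$ of the floor function. The paper just writes it in two lines, substituting $j'=j-[\frac{j}{n}]n$ directly into $[\frac{i+j'}{n}]$ and pulling out the integer, whereas you go in the reverse direction from $[\frac{i+j}{n}]$; there is no substantive difference.
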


\begin{proof}\begin{eqnarray*}[\frac{i+j'}{n}]&=&[\frac{i+j-[\frac{j}{n}]n}{n}]\\
&=&[\frac{i+j}{n}]-[\frac{j}{n}].
\end{eqnarray*}
\end{proof}

The formula (3.1) will be used frequently without explanation. Recall the reassociator of $A(n,s,q)$ is defined to be
$$ \phi_{s}=\sum_{i,j,k=0}^{n-1}\q^{si[\frac{j+k}{n}]}1_{i}\otimes
1_{j}\otimes 1_{k}.$$ The next lemma will give the explicit formalism of the elements
$\gamma, \;\mathbf{f},\;\chi$ and $\omega$ in such case. Throughout this section, $\phi_{s}$ is denoted
by $\phi$ for short when this is no confusion.

\begin{lemma} For the quasi-Hopf algebra $A(n,s,q)$, we have
\begin{gather} \gamma=\sum_{j,k=0}^{n-1}\q^{s(j+k)[\frac{j+k}{n}]+sk[\frac{n-j}{n}]-s(j+2k)}1_{j}\otimes 1_{k},\\
\mathbf{f}=\sum_{j,k=0}^{n-1}\q^{s(j+k)[\frac{j+k}{n}]+sk[\frac{n-j}{n}]-sk}1_{j}\otimes 1_{k},\\
\chi=\sum_{i_{1},i_{2},j,k=0}^{n-1}\q^{si_{1}[\frac{i_{2}+j}{n}]-s(i_{1}+i_{2})[\frac{j+k}{n}]}1_{i_{1}}\otimes 1_{i_{2}}\otimes 1_{j}\otimes 1_{k},\end{gather}
and
\begin{eqnarray*}\omega&=&\sum_{i_{1},i_{2},i_{3},i_{4},i_{5}=0}^{n-1}\q^{si_{5}-s(\Sigma_{t=1}^{5}i_{t})[\frac{i_{4}+i_{5}}{n}]
+s i_{1}[\frac{i_{2}+i_{3}+i_{4}}{n}]-si_{5}[\frac{n-i_{4}}{n}]}\\
&&\;\;\;\;\;\;\;\;\;\;1_{i_{1}}\otimes 1_{i_{2}}\otimes 1_{i_{3}}\otimes S(1_{i_{4}})\otimes S(1_{i_{5}}).
\end{eqnarray*}
\end{lemma}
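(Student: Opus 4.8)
The plan is to compute the four elements $\gamma$, $\mathbf{f}$, $\chi$ and $\omega$ directly from their definitions (2.24)--(2.27), specializing the abstract formulas to the concrete quasi-Hopf algebra $A(n,s,q)$. The essential observation that makes this tractable is that $\phi_{s}$, being supported on the idempotents $1_{i}$, is ``diagonal'' in the sense that $\phi_{s}=\sum_{i,j,k}\q^{si[\frac{j+k}{n}]}1_{i}\otimes 1_{j}\otimes 1_{k}$ with $1_{i}1_{j}=\delta_{ij}1_{i}$ and $\sum_{i}1_{i}=1$. Consequently all the products, tensor-leg multiplications and applications of $\D$ appearing in (2.24)--(2.27) reduce to bookkeeping of scalar exponents. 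I will first record the data I need: $\D(1_{i})=\sum_{a+b\equiv i}1_{a}\otimes 1_{b}$ (so $1_{i}$ is grouplike-like on the idempotent lattice), $S(1_{i})=1_{n-i}$ for $i\neq 0$ and $S(1_{0})=1_{0}$ (from $S(g_{2})=g_{2}^{-1}$), $\alpha=g_{2}^{-s}=\sum_{i}\q^{-si}1_{i}$, $\beta=1$, and the values $\phi_{s}^{(k)}$, $\phi_{s}^{(-k)}$ on each leg. With these in hand each of $\gamma,\mathbf{f},\chi,\omega$ is obtained by a finite multi-index sum.

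The steps, in order: (1) Compute the auxiliary element $(1\otimes\phi_{s}^{-1})(\mathrm{id}\otimes\mathrm{id}\otimes\D)(\phi_{s})=\sum T^{i}\otimes U^{i}\otimes V^{i}\otimes W^{i}$, again a diagonal sum over four indices with an explicit exponent obtained by combining $\q^{si_{1}[\frac{i_{2}+i_{3}+i_{4}}{n}]}$ from $(\mathrm{id}\otimes\mathrm{id}\otimes\D)(\phi_{s})$ with the inverse-reassociator exponent $-\q^{\,\cdot}$ on legs $2,3,4$, using Lemma 3.2 to handle the appearance of reduced residues. (2) Substitute into (2.24) for $\gamma$: apply $S$ to the first two legs, multiply by $\alpha=\sum\q^{-si}1_{i}$ on each, and collapse the idempotents; the exponent $s(j+k)[\frac{j+k}{n}]+sk[\frac{n-j}{n}]-s(j+2k)$ should drop out after combining the contributions $[\frac{j+k}{n}]$-type terms from $\phi$ and $\phi^{-1}$ with the two $-sj,-2sk$ coming from the two copies of $\alpha$. (3) Compute $\mathbf{f}$ from (2.25) as $\sum (S\otimes S)(\D^{op}(\overline{X}^{i}))\cdot\gamma\cdot\D(\overline{Y}^{i}\beta S(\overline{Z}^{i}))$; since $\beta=1$ and everything is built from idempotents, $\D^{op}(1_{i})=\D(1_{i})$ and $S(\overline Z^{i})=S(1_{k})$ contribute only scalars and a reindexing $k\mapsto n-k$, which converts the $-s(j+2k)$ of $\gamma$ into the $-sk$ of $\mathbf{f}$ — this is the cleanest place to double-check signs. (4) Compute $\chi=(\phi\otimes 1)(\D\otimes\mathrm{id}\otimes\mathrm{id})(\phi^{-1})$ directly: $(\D\otimes\mathrm{id}\otimes\mathrm{id})(\phi^{-1})$ has exponent $-\q^{s(i_{1}+i_{2})[\frac{j+k}{n}]}$ on legs $(i_{1}{+}i_{2},j,k)$, and multiplying by $\phi\otimes 1$ on the first three legs with first leg $i_{1}$ gives the stated $\q^{si_{1}[\frac{i_{2}+j}{n}]-s(i_{1}+i_{2})[\frac{j+k}{n}]}$. (5) Finally assemble $\omega$ from (2.27): apply $\mathrm{id}\otimes\D\otimes S\otimes S$ to $\chi$ (splitting leg $i_{2}$ of $\chi$ into $i_{2},i_{3}$ and sending legs $3,4$ through $S$, i.e. $j\mapsto n-i_{4},k\mapsto n-i_{5}$), right-multiply by $\phi\otimes 1\otimes 1$ on legs $1,2,3$, and left-multiply leg-5/leg-4 by $\tau(\mathbf{f}^{-1})$; collecting exponents and repeatedly applying $[\frac{i+j'}{n}]=[\frac{i+j}{n}]-[\frac{j}{n}]$ to normalize the residues $n-i_{4},n-i_{5}$ yields the five-index exponent displayed in the lemma, still with $S(1_{i_{4}})$ and $S(1_{i_{5}})$ left unevaluated (as the statement asks).

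The main obstacle is purely combinatorial: keeping the Gaussian-bracket exponents straight through five nested operations, because each application of $\D$ or $S$ to an idempotent $1_{i}$ forces residues $n-i$ or sums $a+b$ that must be reduced mod $n$, and every such reduction spawns a correction term via Lemma 3.2. The risk is an off-by-$\q^{s(\cdot)}$ error. I will mitigate this by (a) checking the counit normalization $(\mathrm{id}\otimes\e\otimes\cdots)$ on $\chi$ and $\omega$ and the relations $(\e\otimes\mathrm{id})\gamma$-type identities at each stage, and (b) verifying in the smallest nontrivial case, e.g. $n=2,s=1$, that the formulas reproduce the known element of $D(A(2,1,q))$, which pins down all signs. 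Once $\gamma,\mathbf{f},\chi,\omega$ are nailed down, everything else in Section 3 — the multiplication rule $(\star)$ and comultiplication $(\star\star)$ for $D(A(n,s,q))$, hence the identification with $\Q_{s}\mathbf{u}_{q}(\mathfrak{sl}_{2})$ in Theorem 3.1 — is a substitution.
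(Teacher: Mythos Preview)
Your plan is correct and follows essentially the same approach as the paper: a direct computation of each of $\gamma,\mathbf{f},\chi,\omega$ from the definitions (2.24)--(2.27), exploiting the diagonal form of $\phi_{s}$ in the idempotent basis $\{1_{i}\}$ and using Lemma~3.2 repeatedly to normalize residues. The paper proceeds in exactly your order (first the auxiliary $T\otimes U\otimes V\otimes W$, then $\gamma$, $\mathbf{f}$, $\chi$, $\omega$), and the only cosmetic difference is that in step~(3) the paper shows the net effect of the outer factors on $\gamma$ is multiplication by $\q^{s(j+k)}$ rather than a literal reindexing $k\mapsto n-k$; either bookkeeping lands on the same exponent.
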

\begin{proof}At first, we have
\begin{eqnarray*}T^{i}\otimes U^{i}\otimes V^{i}\otimes W^{i}:&=&(1\otimes \phi^{-1})(id\otimes id\otimes\D)(\phi)\\
&=& \sum_{i,j,k=0}^{n-1}\q^{-si[\frac{j+k}{n}]}1\otimes 1_{i}\otimes
1_{j}\otimes 1_{k}\cdot\\&&\sum_{i_{1},j_{1},k_{1},k_{2}=0}^{n-1}
\q^{si_{1}[\frac{j_{1}+(k_{1}+k_{2})'}{n}]}1_{i_{1}}\otimes
1_{j_{1}}\otimes 1_{k_{1}}\otimes 1_{k_{2}}\\
&=&\sum_{i_{1},i,j,k=0}^{n-1}
\q^{-si[\frac{j+k}{n}]}\q^{si_{1}[\frac{i+(j+k)'}{n}]}1_{i_{1}}\otimes
1_{i}\otimes 1_{j}\otimes 1_{k},
\end{eqnarray*}
and
\begin{eqnarray*}\gamma&=&(S(U^{i})\otimes S(T^{i}))(\alpha\otimes\alpha)(V^{i}\otimes W^{i})\\
&=&\sum_{i_{1},i,j,k=0}^{n-1}
\q^{-si[\frac{j+k}{n}]}\q^{si_{1}[\frac{i+(j+k)'}{n}]}S(1_{i})g_{2}^{-s}1_{j}\otimes S(1_{i_{1}})g_{2}^{-s}1_{k}\\
&=&\sum_{i_{1},i,j,k=0}^{n-1}
\q^{-si[\frac{j+k}{n}]}\q^{si_{1}[\frac{i+(j+k)'}{n}]}\q^{-s(j+k)}S(1_{i})1_{j}\otimes S(1_{i_{1}})1_{k}\\
&=&\sum_{j,k=0}^{n-1}
\q^{-s(n-j)'[\frac{j+k}{n}]}\q^{s(n-k)'[\frac{(n-j)'+(j+k)'}{n}]}\q^{-s(j+k)}1_{j}\otimes 1_{k}\\
&=&\sum_{j,k=0}^{n-1}
\q^{sj[\frac{j+k}{n}]-sk[\frac{n+k}{n}]+sk[\frac{n-j}{n}]+sk[\frac{j+k}{n}]-s(j+k)}1_{j}\otimes 1_{k}\\
&=&\sum_{j,k=0}^{n-1}\q^{s(j+k)[\frac{j+k}{n}]+sk[\frac{n-j}{n}]-s(j+2k)}1_{j}\otimes 1_{k}.
\end{eqnarray*}
Therefore,
\begin{eqnarray*}\mathbf{f}&=&\sum (S\otimes S)(\D^{op}(\overline{X}^{i}))\cdot \mathbf{\gamma} \cdot \D(\overline{Y}^{i}\beta S(\overline{Z}^{i}))\\
&=&\sum_{i_{1},i_{2},j_{1},k_{1}=0}^{n-1}\q^{-s(i_{1}+i_{2})'[\frac{j_{1}+k_{1}}{n}]}(S(1_{i_{1}})\otimes S(1_{i_{2}}))\gamma\D(1_{j_{1}}1_{(n-k_{1})'})\\
&=&\sum_{j,k=0}^{n-1}\q^{-s((n-j)'+(n-k)')'[\frac{(j+k)'+(n-(j+k)')'}{n}]}\gamma 1_{j}\otimes 1_{k}\\
&=& \sum_{j,k=0}^{n-1}\q^{s(j+k)}\gamma 1_{j}\otimes 1_{k}\\
&=&\sum_{j,k=0}^{n-1}\q^{s(j+k)[\frac{j+k}{n}]+sk[\frac{n-j}{n}]-sk}1_{j}\otimes 1_{k}.
\end{eqnarray*}
The computation for $\chi$ is easy. Indeed,
\begin{eqnarray*}\chi&=&(\phi\otimes 1)(\D\otimes id\otimes id)(\phi^{-1})\\
&=&\sum_{i,j,k=0}^{n-1}\q^{si[\frac{j+k}{n}]}1_{i}\otimes1_{j}\otimes1_{k}\otimes1\\
&& \sum_{i_{1},i_{2},j_{1},k_{1}=0}^{n-1}\q^{-s(i_{1}+i_{2})'[\frac{j_{1}+k_{1}}{n}]}1_{i_{1}}\otimes1_{i_{2}}\otimes1_{j_{1}}\otimes1_{k_{1}}\\
&=& \sum_{i_{1},i_{2},j,k=0}^{n-1}\q^{si_{1}[\frac{i_{2}+j}{n}]-s(i_{1}+i_{2})[\frac{j+k}{n}]}1_{i_{1}}\otimes 1_{i_{2}}\otimes 1_{j}\otimes 1_{k}.
\end{eqnarray*}
The desire element $\omega$ can be gotten in the following way:
\begin{eqnarray*} \omega&=&(1\otimes 1\otimes 1\otimes \tau(\mathbf{f}^{-1}))(id\otimes \D\otimes S\otimes S)(\mathbf{\chi})(\phi\otimes 1\otimes 1)\\
&=& \sum_{j,k=0}^{n-1}\q^{sk-s(j+k)[\frac{j+k}{k}]-sk[\frac{n-j}{n}]}1\otimes 1\otimes1\otimes 1_{k}\otimes 1_{j}\sum_{i_{1},\ldots,i_{5}=0}^{n-1}\\
&& \q^{si_{1}[\frac{(i_{2}+i_{3})'+i_{4}}{n}]-s(i_{1}+(i_{2}+i_{3})')[\frac{i_{4}+i_{5}}{n}]}1_{i_{1}}
\otimes 1_{i_{2}}\otimes 1_{i_{3}}\otimes S(1_{i_{4}})\otimes S(1_{i_{5}}) \\
&&\sum_{i_{1},i_{2},i_{3}=0}^{n-1}\q^{si_{1}[\frac{i_{2}+i_{3}}{n}]}1_{i_{1}}
\otimes 1_{i_{2}}\otimes 1_{i_{3}}\otimes1\otimes 1\\
&=& \sum_{i_{1},\ldots,i_{5}=0}^{n-1}\q^{si_{1}[\frac{i_{2}+i_{3}}{n}]+s(n-i_{4})-s(2n-i_{4}-i_{5})[\frac{(n-i_{4})'+(n-i_{5})'}{n}]
-s(n-i_{4})[\frac{n-(n-i_{5})'}{n}]}\\
&&\q^{si_{1}[\frac{i_{2}+i_{3}+i_{4}}{n}]-si_{1}[\frac{i_{2}+i_{3}}{n}]-s(i_{1}+i_{2}+i_{3})[\frac{i_{4}+i_{5}}{n}]}1_{i_{1}}
\otimes 1_{i_{2}}\otimes 1_{i_{3}}\otimes S(1_{i_{4}})\otimes S(1_{i_{5}})\\
&=&\sum_{i_{1},\ldots,i_{5}=0}^{n-1}\q^{-si_{4}+s(i_{4}+i_{5})[\frac{(n-i_{4})'+(n-i_{5})'}{n}]+si_{4}[\frac{n-i_{5}}{n}]
+si_{1}[\frac{i_{2}+i_{3}+i_{4}}{n}]-s(i_{1}+i_{2}+i_{3})[\frac{i_{4}+i_{5}}{n}]}\\
&&\;\;\;\;\;\;\;\;\;\;1_{i_{1}}\otimes 1_{i_{2}}\otimes 1_{i_{3}}\otimes S(1_{i_{4}})\otimes S(1_{i_{5}})\\
&=&\sum_{i_{1},\ldots,i_{5}=0}^{n-1}\q^{-si_{4}+s(i_{4}+i_{5})[\frac{2n-i_{4}-i_{5}}{n}]-si_{5}[\frac{n-i_{4}}{n}]
+si_{1}[\frac{i_{2}+i_{3}+i_{4}}{n}]-s(i_{1}+i_{2}+i_{3})[\frac{i_{4}+i_{5}}{n}]}\\
&&\;\;\;\;\;\;\;\;\;\;1_{i_{1}}\otimes 1_{i_{2}}\otimes 1_{i_{3}}\otimes S(1_{i_{4}})\otimes S(1_{i_{5}})\\
&=&\sum_{i_{1},\ldots,i_{5}=0}^{n-1}\q^{-si_{4}+s(i_{4}+i_{5})(1-[\frac{i_{4}+i_{5}}{n}])-si_{5}[\frac{n-i_{4}}{n}]
+si_{1}[\frac{i_{2}+i_{3}+i_{4}}{n}]-s(i_{1}+i_{2}+i_{3})[\frac{i_{4}+i_{5}}{n}]}\\
&&\;\;\;\;\;\;\;\;\;\;1_{i_{1}}\otimes 1_{i_{2}}\otimes 1_{i_{3}}\otimes S(1_{i_{4}})\otimes S(1_{i_{5}})\\
&=&\sum_{i_{1},i_{2},i_{3},i_{4},i_{5}=0}^{n-1}\q^{si_{5}-s(\Sigma_{t=1}^{5}i_{t})[\frac{i_{4}+i_{5}}{n}]
+s i_{1}[\frac{i_{2}+i_{3}+i_{4}}{n}]-si_{5}[\frac{n-i_{4}}{n}]}\\
&&\;\;\;\;\;\;\;\;\;\;1_{i_{1}}\otimes 1_{i_{2}}\otimes 1_{i_{3}}\otimes S(1_{i_{4}})\otimes S(1_{i_{5}}).
\end{eqnarray*}
\end{proof}

Once the element $\omega$ is known, the multiplication rules of $D(A(n,s,q))$ can be determined according to formula $(\star)$.

\begin{proposition} In $D(A(n,s,q))$, we have the following relations
\begin{eqnarray}&&(g_{2}\bi \e)^{n}=1\bi \e,\;\;(x\bi \e)^{\frac{n^{2}}{s}}=0,\\
&&(g_{2}\bi \e)(x\bi \e)(g_{2}\bi \e)^{-1}=\q (x\bi \e),\\
&&(1\bi g)(g_{2}\bi \e)=(g_{2}\bi \e)(1\bi g),\;\;(\sum_{i=0}^{n-1}q^{si}1_{i}\bi g)^{n}=g_{2}^{2s}\bi \e,\\
&&(1\bi p_{0}^{1})^{\frac{n^{2}}{s}}=0,\;\;(g_{2}\bi \e)(1\bi p_{0}^{1})(g_{2}\bi \e)^{-1}=\q^{-1}(1\bi p_{0}^{1}),\\
&&(\sum_{i=0}^{n-1}q^{si}1_{i}\bi g)(x\bi \e)(\sum_{i=0}^{n-1}q^{si}1_{i}\bi g)^{-1}=\q^{-s}q^{2s}(x\bi \e),\\
&&(\sum_{i=0}^{n-1}q^{si}1_{i}\bi g)(1\bi p_{0}^{1})(\sum_{i=0}^{n-1}q^{si}1_{i}\bi g)^{-1}=\q^{s} q^{-2s}(1\bi p_{0}^{1}),
\end{eqnarray}
and
\begin{eqnarray*}(\sum_{i=0}^{n-1}q^{si}1_{i}\bi p_{0}^{1})(x\bi \e)&-&q^{s}(x\bi \e)(\sum_{i=0}^{n-1}q^{si}1_{i}\bi p_{0}^{1})\\
&=&(1\bi \e)-(g_{2}^{s}\sum_{i=0}^{n-1}q^{si}1_{i}\bi g).
\end{eqnarray*}
\end{proposition}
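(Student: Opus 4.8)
The plan is to verify each of the relations (3.6)--(3.12) by direct computation inside $D(A(n,s,q))$, using the multiplication rule $(\star)$ together with the explicit formula for $\omega$ obtained in Lemma 3.3 and the embedding $h\hookrightarrow h\bi\e$. The relations split into two groups. The relations (3.6), (3.7) that involve only elements of the form $h\bi\e$ are immediate: since $H=A(n,s,q)$ sits inside $D(H)$ as a subquasi-Hopf algebra, they are just the defining relations (2.19) of $A(n,s,q)$ restated, and nothing needs to be computed. So the real work is with the relations that involve $\mathbf{T}(H^{\ast})$-type elements, i.e. those containing $1\bi p_{0}^{1}$ or $\sum_{i}q^{si}1_{i}\bi g$ (here $g$ and $p_{0}^{1}$ are, via the isomorphism $\varphi$ of Lemma 2.1, the elements of $M(n,s,q)^{\ast}$ dual to the group-like $g$ and to the length-one path $p_{0}^{1}$).

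The key computational tool is the special case (2.35) of $(\star)$, namely
\begin{equation*}
(1\bi\varphi)(h\bi\e)=h_{(1)(2)}\bi\,S(h_{(2)})\rightharpoonup\varphi\leftharpoonup h_{(1)(1)},
\end{equation*}
together with its left-handed analogue read off from $(\star)$. First I would compute the products $(g_{2}\bi\e)(1\bi p_{0}^{1})(g_{2}\bi\e)^{-1}$ and $(g_{2}\bi\e)(\sum_i q^{si}1_i\bi g)(g_{2}\bi\e)^{-1}$: here the reassociator contributions are mild because conjugation by a group-like is involved, and one only needs the comultiplication of $A(n,s,q)$ given in (2.21) and the pairing of $p_0^1$, $g$ with $A(n,s,q)$ under $\varphi$. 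This yields (3.8) and the first half of (3.9). Next I would treat the mixed conjugations (3.10) and (3.11): conjugating $x\bi\e$ by $\sum_i q^{si}1_i\bi g$ and conjugating $1\bi p_0^1$ by the same element. These require the full multiplication rule $(\star)$ with the five-fold tensor $\omega$ of Lemma 3.3, applied to $h=x$ and $h=g_2$ (since $\sum_i q^{si}1_i$ is a linear combination of the $g_2^j$), and one has to keep careful track of the $\q$-powers coming from $\omega^{(1)},\dots,\omega^{(5)}$ together with the powers $\q^{-si[\frac{j+k}{n}]}$ hidden in the $\rightharpoonup$, $\leftharpoonup$ actions on $\varphi,\psi$. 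The nilpotency statements $(1\bi p_0^1)^{n^2/s}=0$ and the order relation $(\sum_i q^{si}1_i\bi g)^n=g_2^{2s}\bi\e$ follow from the same kind of bookkeeping: powers of $1\bi p_0^1$ correspond, on the dual side, to convolution powers of $(p_0^1)^{\ast}$, which vanish beyond the length bound $n[\frac ns]$ once the $q^{-s}$-Gaussian binomials in (2.9) are taken into account, exactly paralleling the relation $x^{n^2/s}=0$; and the $n$-th power of the group-like-type generator collapses to $g_2^{2s}$ by a geometric-series computation using the $n^2$-th primitive root of unity $q$ and $\q=q^n$.

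The last and most delicate relation is the quantum Serre-type relation
\begin{equation*}
\Bigl(\sum_{i=0}^{n-1}q^{si}1_{i}\bi p_{0}^{1}\Bigr)(x\bi\e)-q^{s}(x\bi\e)\Bigl(\sum_{i=0}^{n-1}q^{si}1_{i}\bi p_{0}^{1}\Bigr)=(1\bi\e)-\Bigl(g_{2}^{s}\sum_{i=0}^{n-1}q^{si}1_{i}\bi g\Bigr).
\end{equation*}
Here both products $(\sum_i q^{si}1_i\bi p_0^1)(x\bi\e)$ and $(x\bi\e)(\sum_i q^{si}1_i\bi p_0^1)$ must be expanded via $(\star)$, and the cancellation producing the right-hand side happens only after all the $\omega$-reassociator phases and all the phases from the $\rightharpoonup/\leftharpoonup$ actions conspire; this is the step I expect to be the main obstacle. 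The right-hand side $(1\bi\e)-(g_2^s\sum_i q^{si}1_i\bi g)$ should be recognized as the image under $D$ of the expected commutator $1-g_1g_2^s$ once $g_1$ is identified with $\sum_i q^{si}1_i\bi g$ (up to the group-like normalization), so a useful sanity check is that the degree-zero (path-length-zero) part of the computation reproduces $1-g_1g_2^s$ and the potential degree-two terms cancel. Concretely, I would organize the computation by first pushing $1\bi p_0^1$ and $x\bi\e$ each into the normal form $\sum (\text{group-like}\cdot x^{a})\bi(p_j^b)^{\ast}$, multiply the normal forms, and then collapse the resulting sums of $1_i$'s using the orthogonality $1_i1_j=\delta_{ij}1_i$ and the identity (3.1) of Lemma 3.2 to simplify the nested Gaussian-bracket exponents. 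The whole proposition is thus ``just'' a bookkeeping exercise, but a heavy one; the payoff is that relations (3.6)--(3.12) are precisely the defining relations (2.37)--(2.40) of $\Q_s\mathbf{u}_q(\mathfrak{sl}_2)$ under the assignment $g_1\mapsto\sum_i q^{si}1_i\bi g$, $g_2\mapsto g_2\bi\e$, $x\mapsto x\bi\e$, $y\mapsto\sum_i q^{si}1_i\bi p_0^1$, which is exactly what Theorem 3.1 needs.
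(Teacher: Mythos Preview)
Your proposal is correct and follows essentially the same direct-computation strategy as the paper's proof. The paper organizes the final commutator relation slightly differently---it computes $(1\bi p_{0}^{1})(x\bi\e)$ first via the five-term expansion of $(\Delta\otimes id)\Delta(x)$ and then left-multiplies by $\sum_{i}q^{si}1_{i}\bi\e$---and it is more explicit about the quasi-associativity of $H^{\ast}\cong M(n,s,q)$ when handling powers of $1\bi p_{0}^{1}$ (distinguishing the bracketings $(p_{0}^{1})^{\stackrel{\rightharpoonup}{l}}$ and $(p_{0}^{1})^{\stackrel{\leftharpoonup}{l}}$ before invoking the vanishing), but these are execution details within the same plan you outline.
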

\begin{proof} Formulas (3.5) and (3.6) are clear since $A(n,s,q)$ is a subquasi-Hopf algebra of its double. The first part of formula (3.7) and the second part of formula (3.8) are direct consequences of formula (2.33). For the second part of (3.7), one has
\begin{eqnarray*}(1\bi g)(1\bi g)&=&\omega^{(3)}\bi (\omega^{(5)}\rightharpoonup g\leftharpoonup \omega^{(1)})(\omega^{(4)}\rightharpoonup g\leftharpoonup \omega^{(2)})\\
&=&\sum_{i_{3}=0}^{n-1}\q^{-s[\frac{(n-1)'+(n-1)'}{n}]}1_{i_{3}}\bi g^{2}=g_{2}^{-s}\bi g^{2}.
\end{eqnarray*}
Inductively, one has
\begin{equation} (1\bi g)^{i}=g_{2}^{-s(i-1)}\bi g^{i}
\end{equation}
for $1\leq i\leq n$. Thus $(\sum_{i=0}^{n-1}q^{si}1_{i}\bi g)^{n}=(\sum_{i=0}^{n-1}q^{si}1_{i}\bi \e)^{n}(1\bi g)^{n}=(\sum_{i=0}^{n-1}\q^{si}1_{i}\bi \e)(1\bi g)^{n}=
g_{2}^{2s}\bi \e$.

Since the multiplication of $M(n,s,q)$ is not associative in general, we need two notions. For any algebra (maybe not associative) $A$, let $X\in A$. Define
$$X^{\stackrel{\rightharpoonup}{l}}=:\stackrel{l}{\overbrace{(\cdots(X\cdot X)\cdot X)\cdots)}},\;\;X^{\stackrel{\leftharpoonup}{l}}=:\stackrel{l}{\overbrace{(\cdots(X\cdot(X\cdot X))
\cdots)}}.$$  For the first part of (3.8), we have
\begin{eqnarray*} (1\bi p_{0}^{1})(1\bi p_{0}^{1})&=& \omega^{(3)}\bi (\omega^{(5)}\rightharpoonup p_{0}^{1}\leftharpoonup \omega^{(1)})(\omega^{(4)}\rightharpoonup p_{0}^{1}\leftharpoonup \omega^{(2)})\\
&=&\sum_{i_{3}=0}^{n-1}\q^{s[\frac{1+i_{3}}{n}]}1_{i_{3}}\bi (p_{0}^{1})^{\stackrel{\rightharpoonup}{2}}\\
&=&\sum_{i_{3}=0}^{n-1}\q^{s[\frac{1+i_{3}}{n}]}1_{i_{3}}\bi (p_{0}^{1})^{\stackrel{\leftharpoonup}{2}} .
\end{eqnarray*}
Inductively, one can prove the following equalities:
\begin{gather} (1\bi p_{0}^{1})^{\stackrel{\rightharpoonup}{l}}=g_{2}^{s[\frac{l}{n}]}\sum_{i_{3}=0}^{n-1}\q^{s[\frac{1+i_{3}}{n}]+s[\frac{2+i_{3}}{n}]+\cdots +s[\frac{l'-1+i_{3}}{n}]}
  1_{i_{3}}\bi (p_{0}^{1})^{\stackrel{\leftharpoonup}{l}}, \\
  (1\bi p_{0}^{1})^{\stackrel{\leftharpoonup}{l}}=
  \q^{sl'[\frac{l}{n}]}g_{2}^{s[\frac{l}{n}]}\sum_{i_{3}=0}^{n-1}\q^{s[\frac{1+i_{3}}{n}]+s[\frac{2+i_{3}}{n}]+\cdots +s[\frac{l'-1+i_{3}}{n}]}1_{i_{3}}\bi (p_{0}^{1})^{\stackrel{\rightharpoonup}{l}}.
\end{gather}
 Comparing with Lemma 3.6 in \cite{HLY}, we indeed have $(1\bi p_{0}^{1})^{\stackrel{\rightharpoonup}{l}}=(1\bi p_{0}^{1})^{\stackrel{\leftharpoonup}{l}}$ and $(1\bi p_{0}^{1})^{n[\frac{n}{s}]}=0$.
 Now let's prove the formula (3.9).
\begin{eqnarray*}&&(\sum_{i=0}^{n-1}q^{si}1_{i}\bi g)(x\bi \e)(\sum_{i=0}^{n-1}q^{si}1_{i}\bi g)^{-1}\\ &=&(\sum_{i=0}^{n-1}q^{si}1_{i}x_{(1)(2)}\omega^{(3)}\bi (\omega^{(5)}\rightharpoonup \e\leftharpoonup \omega^{(1)})(\omega^{(4)}S(x_{(2)})\rightharpoonup g\leftharpoonup
 x_{(1)(1)}\omega^{(2)}))\\&&(\sum_{i=0}^{n-1}q^{si}1_{i}\bi g)^{-1}\\
 &=& (\sum_{i=0}^{n-1}q^{si}1_{i}\bi \e)(\sum_{j=1}^{n-1}1_{i}x\bi q^{-(n-1)s}g+ 1_{0}x\bi q^{s}g)(\sum_{i=0}^{n-1}q^{si}1_{i}\bi g)^{-1}\\
 &=&(x\sum_{j=0}^{n-2}q^{s(j+1)}1_{j}\bi q^{-(n-1)s}g+ x1_{n-1}\bi q^{s}g)(\sum_{i=0}^{n-1}q^{si}1_{i}\bi g)^{-1}\\
 &=& (x\sum_{j=0}^{n-2}\q^{-s}q^{2s}q^{sj}1_{j}\bi g+  x\q^{-s}q^{2s}q^{(n-1)s}1_{n-1}\bi g)(\sum_{i=0}^{n-1}q^{si}1_{i}\bi g)^{-1} \\
 &=&\q^{-s}q^{2s}(x\bi \e)(\sum_{i=0}^{n-1}q^{si}1_{i}\bi g)(\sum_{i=0}^{n-1}q^{si}1_{i}\bi g)^{-1} \\
  &=&\q^{-s}q^{2s}(x\bi \e).
 \end{eqnarray*}
 For (3.10), note that $(\sum_{i=0}^{n-1}q^{si}1_{i}\bi g)^{-1}=(g_{2}^{s}\sum_{i=0}^{n-1}q^{-si}1_{i}\bi g^{n-1})$.
\begin{eqnarray*}&&(\sum_{i=0}^{n-1}q^{si}1_{i}\bi g)(1\bi p_{0}^{1})(\sum_{i=0}^{n-1}q^{si}1_{i}\bi g)^{-1}\\
 &=&(\sum_{i=0}^{n-1}q^{si}1_{i}\omega^{(3)}\bi (\omega^{(5)}\rightharpoonup p_{0}^{1}\leftharpoonup \omega^{(1)})(\omega^{(4)}\rightharpoonup g\leftharpoonup \omega^{(2)}))(\sum_{i=0}^{n-1}q^{si}1_{i}\bi g)^{-1}\\
 &=& (\sum_{i=0}^{n-1}\q^{s}q^{si}1_{i}\bi (p_{0}^{1}g))(g_{2}^{s}\sum_{i=0}^{n-1}q^{-si}1_{i}\bi g^{n-1})\\
 &=&(\sum_{i=0}^{n-1}\q^{s}q^{si}1_{i}\bi \e)(\sum_{i_{3}=0}^{n-1}\q^{s}\q^{s-s(1+i_{3})+s(n-1)[\frac{n+1+i_{3}}{n}]+s[\frac{2}{n}]}q^{-s(1+i_{3})'}1_{i_{3}}g_{2}^{s}\bi g^{n-1}(p_{0}^{1}g))\\
 &=&(\sum_{i=0}^{n-1}\q^{s}q^{si}1_{i}\bi \e)(\sum_{i_{3}=0}^{n-1}\q^{s}\q^{-s(1+i_{3})+s[\frac{2}{n}]}q^{-s(1+i_{3})}1_{i_{3}}g_{2}^{s}\bi g^{n-1}(p_{0}^{1}g))  \\
 &=&(\sum_{i=0}^{n-1}\q^{s}q^{si}1_{i}\bi \e)(\sum_{i_{3}=0}^{n-1}\q^{s[\frac{2}{n}]}q^{-s(1+i_{3})}1_{i_{3}}\bi g^{n-1}(p_{0}^{1}g)) \\
  &=&\sum_{i=0}^{n-1}\q^{s}q^{-s}\q^{s[\frac{2}{n}]}1_{i}\bi g^{n-1}(p_{0}^{1}g)\\
  &=&\sum_{i=0}^{n-1}\q^{s}q^{-s}\q^{s[\frac{2}{n}]}1_{i}\bi q^{-s}\q^{-s[\frac{2}{n}]}p_{0}^{1}\\
  &=&\q^{s}q^{-2s} 1\bi p_{0}^{1},
 \end{eqnarray*}
 where the second last equality are gotten from (2.9).

 Now the only task is to prove the last formula in this proposition. Note that
 \begin{eqnarray*}(\D\otimes id)\D(x)&=&1\otimes 1 \otimes \sum_{i=1}^{n-1}1_{i}x+g_{2}^{s}\otimes g_{2}^{s}\otimes 1_{0}x+1\otimes \sum_{i=1}^{n-1}1_{i}x\otimes\sum_{i=0}^{n-1}q^{-si}1_{i}\\
 &&+ g_{2}^{s}\otimes 1_{0}x\otimes  \sum_{i=0}^{n-1}q^{-si}1_{i}+x\otimes \sum_{i=0}^{n-1}q^{-si}1_{i}\otimes\sum_{i=0}^{n-1}q^{-si}1_{i}.
 \end{eqnarray*}
 By applying above five items $1\otimes 1 \otimes \sum_{i=1}^{n-1}1_{i}x,\; g_{2}^{s}\otimes g_{2}^{s}\otimes 1_{0}x,\;1\otimes \sum_{i=1}^{n-1}1_{i}x\otimes\sum_{i=0}^{n-1}q^{-si}1_{i},\;
 g_{2}^{s}\otimes 1_{0}x\otimes  \sum_{i=0}^{n-1}q^{-si}1_{i}$ and $x\otimes \sum_{i=0}^{n-1}q^{-si}1_{i}\otimes\sum_{i=0}^{n-1}q^{-si}1_{i}$ into the following formula
 $$(1\bi p_{0}^{1})(x\bi \e)=x_{(1)(2)}\bi S(x_{2})\rightharpoonup p_{0}^{1}\leftharpoonup x_{(1)(1)},$$
 we get
 $$(1\bi p_{0}^{1})(x\bi \e)=\sum_{i=1}^{n-1}1_{i}x\bi p_{0}^{1}+\q^{s}1_{0}x\bi p_{0}^{1}-g_{2}^{s}\bi g+\sum_{i=0}^{n-1}q^{-si}1_{i}\bi \e.$$
 Multiplying the element $\sum_{i=0}^{n-1}q^{si}1_{i}\bi \e$ to above equality, we have
  \begin{eqnarray*}(\sum_{i=0}^{n-1}q^{si}1_{i}\bi p_{0}^{1})(x\bi \e)&=&q^{s}(x\bi \e)(\sum_{i=0}^{n-1}q^{si}1_{i}\bi p_{0}^{1})\\
  &&+(1\bi \e)-(g_{2}^{s}\sum_{i=0}^{n-1}q^{si}1_{i}\bi g).
    \end{eqnarray*}
  \end{proof}

Next, let us determine the comultiplication, the counit, the antipode and the elements $\alpha,\beta $ of $D(A(n,s,q))$.
From now on, sometimes, we denote $h\bi \varphi\in D(A(n,s,q))$ by $h\varphi$ for short.
\begin{proposition}In $D(A(n,s,q))$, we have
     \begin{eqnarray} && \;\;\;\;\;\D(g_{2})=g_{2}\otimes g_{2},\\
      && \;\;\;\;\;\D(x)=1\otimes \sum_{i=1}^{n-1}1_{i}x+g_{2}^{s}\otimes 1_{0}x+x\otimes \sum_{i=0}^{n-1}q^{-si}1_{i},\\
     && \;\;\;\;\;\D(\sum_{i=0}^{n-1}q^{si}1_{i} g)=\sum_{i=0}^{n-1}q^{si}1_{i} g \otimes \sum_{i=0}^{n-1}q^{si}1_{i} g, \\
      && \;\;\;\;\;\D(\sum_{i=0}^{n-1}q^{si}1_{i}p_{0}^{1})=\sum_{i=0}^{n-1}q^{si}1_{i}p_{0}^{1}\otimes \sum_{i=0}^{n-1}q^{si}1_{i}+g_{2}^{s}\sum_{i=0}^{n-1}q^{si}1_{i} g\otimes \sum_{i=0}^{n-2}q^{si}1_{i}p_{0}^{1}
     \\&& \;\;\;\;\; \;\;\;\;\; \;\;\;\;\; \;\;\;\;\; \;\;\;\;\; \;\;\;\;\;\;\;+\sum_{i=0}^{n-1}q^{si}1_{i} g\otimes q^{s(n-1)}1_{n-1}p_{0}^{1},\notag\\
     && \;\;\;\;\;\e(g_{2})=1,\;\;\;\;\e(x)=0,\\
    &&  \;\;\;\;\;\e(\sum_{i=0}^{n-1}q^{si}1_{i} g)=1,\;\;\;\;\e(\sum_{i=0}^{n-1}q^{si}1_{i}p_{0}^{1})=0,\\
    &&  \;\;\;\;\;S(g_{2})=g_{2}^{-1},\;\;\;\;S(x)=-x\sum_{i=0}^{n-1}q^{s(i-n)}1_{i},\\
    &&  \;\;\;\;\;S(\sum_{i=0}^{n-1}q^{si}1_{i}g)=(\sum_{i=0}^{n-1}q^{si}1_{i}g)^{-1}, \\
     && \;\;\;\;\;S(\sum_{i=0}^{n-1}q^{si}1_{i}p_{0}^{1}) =-q^{(n-1)s}g_{2}^{-s}(\sum_{i=0}^{n-1}q^{si}g)^{-1}p_{0}^{1},\\
    &&  \;\;\;\;\;\alpha=g_{2}^{-s}\bi\e,\;\;\;\;\beta=1\bi \e.
      \end{eqnarray}
\end{proposition}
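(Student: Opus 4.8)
Here is how I would prove Proposition~3.5.

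The proof is a direct, generator-by-generator computation whose only inputs are Theorem~2.4 (the product rule $(\star)$, the coproduct rule $(\star\star)$, and the formulas (2.30)--(2.32) for $\e_D$, $S_D$, $\alpha_D=\alpha\bi\e$, $\beta_D=\beta\bi\e$), the explicit reassociator $\phi_s$, and the elements $\mathbf{f},\omega$ computed in Lemma~3.3. For $g_2\bi\e$ and $x\bi\e$ there is essentially nothing to do: since $h\mapsto h\bi\e$ embeds $A(n,s,q)$ as a subquasi-Hopf algebra of its double (Theorem~2.4), the coproduct, counit and antipode of these two elements are exactly those in (2.21), (2.23), and $\alpha_D=\alpha\bi\e=g_2^{-s}\bi\e$, $\beta_D=\beta\bi\e=1\bi\e$ because $\alpha=g_2^{-s}$, $\beta=1$ in $A(n,s,q)$ by (2.22). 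This disposes of all the formulas involving $g_2$ and $x$ and the formula for $\alpha,\beta$.

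For the two genuinely new generators $\sum_i q^{si}1_i\bi g$ and $\sum_i q^{si}1_i\bi p_0^1$ I would route everything through the twisted embedding $\mathbf{T}\colon H^\ast\to D(H)$ of (2.29). Evaluating (2.29) at $\phi=\phi_s$ and using $S(1_j)=1_{(n-j)'}$, $g_2^{-s}1_k=\q^{-sk}1_k$, one finds $\mathbf{T}(p_0^1)=1\bi p_0^1$ and $\mathbf{T}(g)=\sum_i\q^{si}1_i\bi g$, where on the right one regards $g=g^1$ and $p_0^1$ as elements of $H^\ast=M(n,s,q)$ via Lemma~2.2. A short consequence of $(\star)$ (using the explicit $\omega$, together with $\e(1_i)=\delta_{i0}$) is that $(h\bi\e)(h'\bi\psi)=hh'\bi\psi$ for $h\in A(n,s,q)$, so
\[
\textstyle\sum_i q^{si}1_i\bi p_0^1=\bigl(\sum_i q^{si}1_i\bi\e\bigr)\,\mathbf{T}(p_0^1),\qquad
\sum_i q^{si}1_i\bi g=\bigl(\sum_i q^{si}\q^{-si}1_i\bi\e\bigr)\,\mathbf{T}(g).
\]
Hence it is enough to compute $\D_D,\e_D,S_D$ on $\mathbf{T}(p_0^1)$ and $\mathbf{T}(g)$ and then use multiplicativity of $\D_D,\e_D$, anti-multiplicativity of $S_D$, and the elementary identity $\D(\sum_i c_i1_i)=\sum_{a,b}c_{(a+b)'}1_a\otimes1_b$ in $A(n,s,q)$.

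The counit is immediate from (2.30): $\e_D(\mathbf{T}(\psi))=\psi(\phi^{(1)}S(\phi^{(2)})\alpha\phi^{(3)})$, and a short computation with $\phi_s$ gives $\phi^{(1)}S(\phi^{(2)})\alpha\phi^{(3)}=1$, whence $\e_D(\mathbf{T}(g))=g(1)=1$ and $\e_D(\mathbf{T}(p_0^1))=p_0^1(1)=0$. For the coproduct I would substitute $\phi_s$ and $\phi_s^{-1}$ into $(\star\star)$. When $\psi=g$ is group-like all the actions $\rightharpoonup,\leftharpoonup$ become scalar, and once the superfluous powers of $\q$ contributed by the reassociator cancel against those coming from $\D(\sum_i q^{si}1_i)$ --- this is where $\q=q^n$ and $(a+b)'\equiv a+b\ (\mathrm{mod}\ n)$ are used --- what survives is $(\sum_i q^{si}1_i\bi g)\otimes(\sum_i q^{si}1_i\bi g)$, so this element is group-like. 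When $\psi=p_0^1$, inserting its path-coalgebra coproduct $\D(p_0^1)=p_0^1\otimes1+g\otimes p_0^1$ into $(\star\star)$ produces exactly three summands; tracking the idempotent ranges (the grading shift is what forces the truncated sums $\sum_{i=0}^{n-2}q^{si}1_ip_0^1$ and $q^{s(n-1)}1_{n-1}p_0^1$) and the powers of $q,\q$ reproduces formula (3.17). Finally, for the antipode: $\sum_i q^{si}1_i\bi g$ being group-like, its antipode is its inverse; for $\sum_i q^{si}1_ip_0^1$ I would combine this with $S_D(\mathbf{T}(p_0^1))=\mathbf{f}^{(2)}\mathbf{T}(\mathbf{f}^{(-2)}\rightharpoonup S^{-1}(p_0^1)\leftharpoonup\mathbf{f}^{(1)})\mathbf{f}^{(-1)}$ from (2.32), using the explicit $\mathbf{f}$ of Lemma~3.3 and the value of $S^{-1}$ on $p_0^1$ (the transpose of $S_H^{-1}$, which exists as $A(n,s,q)$ is finite dimensional); after reassembling via $(\star)$ the accumulated powers of $q$ collapse to the single scalar $-q^{(n-1)s}$, giving (3.22).

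The step I expect to be the main obstacle is precisely the coproduct and antipode of $\sum_i q^{si}1_ip_0^1$, i.e.\ (3.17) and (3.22): the computation is mechanical in spirit but superimposes three independent sources of scalars --- the reassociator $\phi_s$ and its inverse, the idempotents $1_i$ (on which $g_2^{\pm s}$ acts by $\q^{\pm si}$), and the passage between $q$ and $\q=q^n$ --- which must be reconciled simultaneously, with Lemma~3.2 (and in particular the identity (3.1)) invoked repeatedly to collapse Gaussian-bracket exponents of the form $[\frac{i+j'}{n}]$. Everything else is the same bookkeeping already carried out in Lemma~3.3 and Proposition~3.4.
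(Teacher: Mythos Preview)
Your proposal is correct and follows essentially the same route as the paper: handle $g_2,x,\alpha,\beta$ via the sub-quasi-Hopf embedding, compute $\mathbf{T}(g)=g_2^s\bi g$ and $\mathbf{T}(p_0^1)=1\bi p_0^1$, feed these into $(\star\star)$ to obtain (3.24)--(3.25), and then use multiplicativity of $\D$ to reach (3.16)--(3.17). The only (minor) divergence is in the endgame for the counit and antipode: the paper reads off $\e_D$ once the coproduct is known and then \emph{verifies} that (3.21)--(3.22) satisfy the antipode axioms against the already computed $\D$ and $\alpha,\beta$, whereas you propose to compute $\e_D$ and $S_D$ directly from (2.30) and (2.32) using the explicit $\mathbf{f}$ of Lemma~3.3. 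Both are valid; the paper's verification-style argument avoids computing $S^{-1}(p_0^1)$ and unwinding (2.32), at the cost of having to guess the correct formula first.
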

\begin{proof} Formulas (3.14),(3.15), (3.18) and (3.20) are clear since $D(A(n,s,q))$ contains $A(n,s,q)$ as a subquasi-Hopf algebra. By (2.29), one can verify directly that
$$\mathbf{T}(g)=g_{2}^{s}\bi g,\;\;\;\;\mathbf{T}(p_{0}^{1})=1\bi p_{0}^{1}.$$
Using the comultiplication formula ($\star\star$), we have
\begin{eqnarray}\D(\mathbf{T}(g))&=& \sum_{i,j=0}^{n-1}\q^{s[\frac{i+j}{n}]}(1_{i}\otimes 1_{j})(\mathbf{T}(g)\otimes \mathbf{T}(g)),\\
\D(\mathbf{T}(p_{0}^{1}))&=&\sum_{i,j=0}^{n-1}\q^{s[\frac{i+j}{n}]}1_{i}\mathbf{T}(p_{0}^{1})\otimes 1_{j}\\
&&+\sum_{i,j=0}^{n-1}\q^{s[\frac{i+j}{n}]-si[\frac{1+j}{n}]}1_{i}\otimes 1_{j}\mathbf{T}(p_{0}^{1})\notag.
\end{eqnarray}
From (3.24), we now know that $\sum_{i=0}^{n-1}q^{si}1_{i}\mathbf{T}(g)$ is a group-like element. Since $\mathbf{T}(g)=g_{2}^{s}\bi g$ and $g_{2}$ is group-like,
$\sum_{i=0}^{n-1}q^{si}1_{i}g$ is a group-like element. Therefore, (3.16) is proved.  By $\D$ is an algebra morphism,
$$\D(\sum_{i=0}^{n-1}q^{si}1_{i}p_{0}^{1})=\D(\sum_{i=0}^{n-1}q^{si}1_{i})\D(p_{0}^{1})=\D(\sum_{i=0}^{n-1}q^{si}1_{i})\D(\mathbf{T}(p_{0}^{1})).$$
Using (3.25) directly, one can get the formula (3.17). Once the comultiplication rule is determined, the counit is clear now. Also, by the definition of
the Drinfeld double, we know that $\alpha=g_{2}^{-s}\bi\e,\;\beta=1\bi \e$. From this and the comultiplication formulas (3.16) and (3.17), one can verify that (3.21) and (3.22) are
the desired formulas for the antipode.
\end{proof}

\textbf{Proof of Theorem 3.1. \;} Define a map
\begin{eqnarray}\Psi:\;\Q_{s}\mathbf{u}_{q}(\mathfrak{sl}_{2})\to D(A(n,s,q)),
&&g_{1}\mapsto \sum_{i=0}^{n-1}q^{si}1_{i}g,\; g_{2}\mapsto g_{2},\notag\\
&&x\mapsto x,\;\;y\mapsto \sum_{i=0}^{n-1}q^{si}1_{i}p_{0}^{1}.\notag
\end{eqnarray}
By Proposition 3.4, it is an algebra morphism. It is also surjective by Theorem 2.4 (1). Comparing the dimensions of two algebras, $\Phi$ is a bijection.
To show the result, it is enough to show that it is a coalgebra morphism. This is a direct consequence of Proposition 3.5 by noting that in the formula (3.17),
$\sum_{i=0}^{n-2}q^{si}1_{i}p_{0}^{1}=(\sum_{i=0}^{n-1}q^{si}1_{i}p_{0}^{1})\sum_{j=1}^{n-1}1_{j}$ and $q^{s(n-1)}1_{n-1}p_{0}^{1}=(\sum_{i=0}^{n-1}q^{si}1_{i}p_{0}^{1})1_{0}$.
 The theorem is proved.  $\;\;\;\;\;\;\;\;\;\;\;\;\;\;\;\;\;\;\;\;\;\square$

\section{Twist equivalence}

We give a sufficient condition to determine when $D(A(n,s,q))$ is not trivial, i.e., not twist equivalent to a Hopf algebra.

\begin{theorem} Assume that $n=2^{m}l$ and $s=2^{m'}l'$ with $(l,2)=(l',2)=1$. If $m'<m$, then $D(A(n,s,q))$ is not twist equivalent to a Hopf algebra.
\end{theorem}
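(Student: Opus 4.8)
The plan is to extract from $\Rep$-$D(A(n,s,q))$ a small tensor subcategory whose associativity constraint is controlled entirely by a $3$-cocycle on a cyclic group, and then to show that this cocycle is cohomologically nontrivial precisely when $m'<m$, which forces non-triviality of the whole quasi-Hopf algebra. First I would observe that the group-like elements $g_{1},g_{2}$ of $\Q_{s}\mathbf{u}_{q}(\mathfrak{sl}_{2})$ generate a commutative group algebra $\k G$ with $G=\langle g_{1},g_{2}\mid g_{1}^{n}=g_{2}^{2s},\ g_{2}^{n}=1,\ g_{1}g_{2}=g_{2}g_{1}\rangle$, and that the $1$-dimensional representations of $\Q_{s}\mathbf{u}_{q}(\mathfrak{sl}_{2})$ (those on which $x,y$ act by $0$) are exactly the characters of $G$. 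These $1$-dimensional modules are closed under tensor product because $\D(x),\D(y)$ each have every term divisible on the left or right by one of $x,y$ (so they act as $0$ on a tensor product of $1$-dimensional modules), and under duals by the formula for $S$. Hence $\mathcal{C}:=\langle\text{1-dim'l modules}\rangle$ is a tensor subcategory of $\Rep$-$\Q_{s}\mathbf{u}_{q}(\mathfrak{sl}_{2})$, and on $\mathcal{C}$ the reassociator $\phi_{s}=\sum_{i,j,k}\q^{si[\frac{j+k}{n}]}1_{i}\otimes 1_{j}\otimes 1_{k}$ acts (since $1_{i}$ is the idempotent projecting onto the $g_{2}$-eigenspace with eigenvalue $\q^{i}$) through the scalars $\q^{si[\frac{j+k}{n}]}$ indexed by the $\Z_{n}$-grading coming from the action of $g_{2}$. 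In other words $\mathcal{C}$ is equivalent, as a tensor category, to $\Rep$ of the group $G$ with associator the pullback of $\Phi_{s}\in Z^{3}(\Z_{n},\k^{\times})$ along the surjection $\hat G\to\Z_{n}$ dual to $\Z_{n}=\langle g_{2}\rangle\hookrightarrow G$; equivalently to $\mathrm{Vec}_{\Z_{n}}^{\,\omega}$-type data where the relevant $3$-cocycle is $\Phi_{s}$.

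Next I would recall the standard computation of $H^{3}(\Z_{n},\k^{\times})\cong\Z_{n}$ with generator represented by $\omega_{a}(g^{i},g^{j},g^{k})=\zeta^{a\,i[\frac{j+k}{n}]}$ for $\zeta$ a primitive $n$-th root of unity; writing $\q$ as (an $n$-th primitive root of $1$ but here) and noting $\Phi_{s}(g^{i},g^{j},g^{k})=\q^{si[\frac{j+k}{n}]}$, the class of $\Phi_{s}$ in $H^{3}(\Z_{n},\k^{\times})\cong\Z/n$ is the residue of $s\cdot e$ where $\q=\zeta^{e}$ with $\zeta$ the fixed generator; since $\q$ is a primitive $n$-th root of unity one may take $\q=\zeta$, so the class of $\Phi_{s}$ is $s\bmod n$. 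Writing $n=2^{m}l$, $s=2^{m'}l'$ with $l,l'$ odd, the element $s\in\Z/n$ has order $n/\gcd(n,s)=2^{m-m'}\cdot(l/\gcd(l,l'))$, which is $>1$ as soon as $m'<m$. Thus under the hypothesis the associator $\Phi_{s}$ is not a coboundary, so $\mathcal{C}$ is \emph{not} equivalent to the representation category of any Hopf algebra (a tensor category all of whose simple objects are invertible comes from a Hopf algebra iff its associator is trivial, i.e.\ iff it is $\Rep$ of a finite group with trivial cocycle).

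Finally I would invoke the invariance of this obstruction under twisting: if $D(A(n,s,q))$ were twist equivalent to a Hopf algebra $K$, then $\Rep$-$D(A(n,s,q))\simeq\Rep$-$K$ as tensor categories, and this equivalence would restrict to a tensor equivalence between $\mathcal{C}$ and the corresponding full tensor subcategory of $\Rep$-$K$ generated by the images of the $1$-dimensional modules. But a full tensor subcategory of $\Rep$ of a Hopf algebra, generated by invertible objects, is itself $\Rep$ of a (commutative, hence group-algebra) Hopf algebra, so it has trivial associator up to equivalence — contradicting the previous paragraph. Hence $D(A(n,s,q))$ is not twist equivalent to a Hopf algebra when $m'<m$. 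The main obstacle is the first paragraph: one must verify carefully that the $1$-dimensional modules really do form a tensor subcategory (i.e.\ that $x$ and $y$ act by zero on all tensor powers, which uses the precise shape of $\D(x),\D(y)$ in (2.41)–(2.42)) and, more delicately, that the restriction of $\phi_{s}$ to this subcategory is genuinely the $3$-cocycle $\Phi_{s}$ and not some twisted-away version of it — this requires checking that the evident fiber functor on $\mathcal{C}$ cannot be modified by a natural monoidal structure killing the cocycle, which is exactly the statement that $[\Phi_{s}]\neq 0$ in $H^{3}$; so the cohomological computation of $H^{3}(\Z_{n},\k^{\times})$ and the order of $s$ in $\Z/n$ is both the crux and the payoff.
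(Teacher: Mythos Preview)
Your overall strategy---pass to a pointed tensor subcategory and show the associator is cohomologically nontrivial---is exactly the paper's. But there is a genuine gap in the execution. You correctly observe that the subcategory $\mathcal{C}$ of all $1$-dimensional modules is equivalent to $\mathrm{Vec}_{\widehat G}^{\,\pi^{*}\Phi_{s}}$ where $\pi:\widehat G\to\mathbb{Z}_{n}$ is the restriction-to-$g_{2}$ map. The error is your ``equivalently to $\mathrm{Vec}_{\mathbb{Z}_{n}}^{\omega}$-type data'': the obstruction you must kill is the class of $\pi^{*}\Phi_{s}$ in $H^{3}(\widehat G,\k^{\times})$, \emph{not} the class of $\Phi_{s}$ in $H^{3}(\mathbb{Z}_{n},\k^{\times})$. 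The inflation map $\pi^{*}$ along a surjection of groups is in general not injective on $H^{3}$, so nonvanishing of $[\Phi_{s}]$ downstairs does not suffice. Indeed, your own computation gives the order of $[\Phi_{s}]$ in $H^{3}(\mathbb{Z}_{n})\cong\mathbb{Z}/n$ as $n/\gcd(n,s)=n/s$, which is $>1$ for \emph{every} proper divisor $s$ of $n$; the hypothesis $m'<m$ is never used, and your argument would ``prove'' the theorem in all cases---contradicting the Etingof--Gelaki result (Corollary~4.3 here) that for $n$ odd and $s=1$ the double \emph{is} twist equivalent to $\mathbf{u}_{q}(\mathfrak{sl}_{2})$. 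Concretely, in that case $\widehat G\cong\mathbb{Z}_{n^{2}}$ and $\pi^{*}[\Phi_{1}]=0$ in $H^{3}(\mathbb{Z}_{n^{2}})$.

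The paper repairs this by working not with all of $\widehat G$ but with a specific cyclic subgroup that exists only under the hypothesis $m'<m$. Since $m'<m$ forces $n$ even and $2s\mid n$, the assignment $g_{1}\mapsto -1$, $g_{2}\mapsto \q^{\,n/(2s)}$ (a primitive $2s$-th root of unity), $x,y\mapsto 0$ defines a $1$-dimensional module $X$; then $\langle X\rangle\cong\mathbb{Z}_{2s}$ inside $\widehat G$, and the associator restricted to $\langle X\rangle$ computes to $f(i,j,k)=(-1)^{\,i[\frac{j+k}{2s}]}$, whose class in $H^{3}(\mathbb{Z}_{2s})\cong\mathbb{Z}/2s$ is $s\not\equiv 0$. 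This simultaneously \emph{uses} the hypothesis (to make $X$ exist) and bypasses the inflation problem (one is restricting to a subgroup rather than inflating from a quotient). If you want to salvage your version you would have to compute $H^{3}(\widehat G,\k^{\times})$ and the image of $\pi^{*}$ directly; the paper's restriction to $\langle X\rangle$ is in effect a shortcut proving $\pi^{*}[\Phi_{s}]\neq 0$ by exhibiting a further restriction that is visibly nonzero.
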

\begin{proof} At first, there is no harm to assume that $n$ is divided by $s$. Secondly, we can identify $D(A(n,s,q))$ with $\Q_{s}\mathbf{u}_{q}(\mathfrak{sl}_{2})$ by Theorem 3.1.
  We construct a 1-dimensional representation for
   $\Q_{s}\mathbf{u}_{q}(\mathfrak{sl}_{2})$ through the following algebra morphism
   $$\rho:\;\Q_{s}\mathbf{u}_{q}(\mathfrak{sl}_{2})\to \kappa,\;\;g_{1}\mapsto -1,\;\;g_{2}\mapsto (-1)^{\frac{1}{s}},\;\;x\mapsto 0,\;\;y\mapsto 0.$$
   One can check directly that $\rho$ is well-defined.
   Denote this representation by $X$. Let Rep-$\Q_{s}\mathbf{u}_{q}(\mathfrak{sl}_{2})$ be the representation category of $\Q_{s}\mathbf{u}_{q}(\mathfrak{sl}_{2})$. It is
   a tensor category.

   Let $\langle X\rangle$ be the subtensor category generated by $X$.
   Explicitly, define
\begin{gather*}X^{\stackrel{\rightharpoonup}{\otimes i}}=:\stackrel{i}{\overbrace{(\cdots(X\otimes X)\otimes X)\cdots)}}.\end{gather*}
Then the objects of $\langle X\rangle$ are direct sums of elements being in $\{X^{\stackrel{\rightharpoonup}{\otimes i}}|0\leq i< 2s\}$.
   Now assume that $\Q_{s}\mathbf{u}_{q}(\mathfrak{sl}_{2})$  is twist equivalent to a Hopf algebra. By the general principle of Tannak-Krein duality (see, e.g., \cite{CE}), there is a fiber functor
   from the category Rep-$\Q_{s}\mathbf{u}_{q}(\mathfrak{sl}_{2})$ to the category of $\kappa$-spaces. Thus its restriction to $\langle X\rangle$ is still a fiber functor. This implies
   the restriction of $\phi_{s}$ to $\langle X\rangle$ should be gotten from a 3-coboundary of $\mathbb{Z}_{2s}$.
   It is not hard to see that
   \begin{gather*}\phi_{s}|_{\langle X\rangle}=\sum_{i,j,k=0}^{2s} \q^{s\frac{ni}{2s}}[\frac{\frac{nj}{2s}+\frac{nk}{2s}}{n}]1_{\frac{ni}{2s}}\otimes 1_{\frac{nj}{2s}}\otimes 1_{\frac{nk}{2s}}\\
   =   \sum_{i,j,k=0}^{2s}(-1)^{i[\frac{j+k}{2s}]}1_{\frac{ni}{2s}}\otimes 1_{\frac{nj}{2s}}\otimes 1_{\frac{nk}{2s}}.\end{gather*}
   By the general theory of
   group cohomology, it is known that the 3-cocycle
   $$f(g_{2s}^{i},g_{2s}^{j},g_{2s}^{k})=(-1)^{i[\frac{j+k}{2s}]},\;\;\;\;0\leq i,j,k< 2s$$
  is not a 3-coboundary where $g_{2s}$ denoting a generator of $\mathbb{Z}_{2s}$. That's a contradiction.
  \end{proof}

 \begin{corollary} If $n$ is even and $s$ is odd, then $D(A(n,s,q))$ is not twist equivalent to a Hopf algebra.
 \end{corollary}

 \begin{corollary} The quasitriangular quasi-Hopf algebra $D(A(n,q))$ is twist equivalent to $\mathbf{u}_{q}(\mathfrak{sl}_{2})$ if and only
 if $n$ is odd.
 \end{corollary}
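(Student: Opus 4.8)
The plan is to reduce Corollary~4.3 to two facts that are already in hand: the theorem of Etingof and Gelaki \cite{EG} takes care of the case $n$ odd, and Theorem~4.1 (specialized to $s=1$) takes care of the case $n$ even. As a preliminary bookkeeping step I would record the two trivial but essential observations that $A(n,q)$ is by definition $A(n,1,q)$, and that $\mathbf{u}_{q}(\mathfrak{sl}_{2})$ is an \emph{ordinary} Hopf algebra (its reassociator is trivial); consequently "twist equivalent to $\mathbf{u}_{q}(\mathfrak{sl}_{2})$" entails "twist equivalent to a Hopf algebra", and twist (gauge) equivalence is a genuine equivalence relation on the class of quasi-Hopf algebras.

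For the forward implication I would simply invoke \cite{EG}: when $n$ is odd, the main result there asserts that the Drinfeld double of the dimension-$n^{3}$ quasi-Hopf algebra $A(q)$ of \cite{Gel} is twist equivalent to $\mathbf{u}_{q}(\mathfrak{sl}_{2})$, and in our notation $A(q)=A(n,1,q)=A(n,q)$. Nothing further is required here beyond matching the parameter conventions.

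For the converse I would argue by contraposition. Suppose $n$ is even. Writing $s=1=2^{0}\cdot 1$, in the notation of Theorem~4.1 one has $m'=0$, while $n=2^{m}l$ with $(l,2)=1$ and $m\geq 1$, so that $m'<m$. Theorem~4.1 then yields that $D(A(n,1,q))$ is not twist equivalent to any Hopf algebra (one may equally cite Corollary~4.2, since here $s=1$ is odd). Since $\mathbf{u}_{q}(\mathfrak{sl}_{2})$ is a Hopf algebra and twist equivalence is transitive and symmetric, it follows that $D(A(n,q))$ is not twist equivalent to $\mathbf{u}_{q}(\mathfrak{sl}_{2})$, as desired.

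I do not expect a genuine obstacle in this corollary: its entire mathematical content is already packaged in Theorem~4.1 and in \cite{EG}. The only points needing any care are purely routine: confirming that the root of unity $q$ occurring in $\mathbf{u}_{q}(\mathfrak{sl}_{2})$ in \cite{EG} is the one fixed in the definition of $A(n,q)$, and recalling that being twist equivalent to a Hopf algebra is invariant under gauge transformation.
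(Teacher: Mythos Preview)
Your proposal is correct and follows the same logic as the paper: the ``only if'' direction is deduced from Corollary~4.2 (equivalently Theorem~4.1 with $s=1$), and the ``if'' direction is attributed to \cite{EG}. The one addition in the paper is that, after citing \cite{EG} for the odd case, it also supplies an explicit twist $J$ built from idempotents $1_{i}^{n^{2}}$ in the group algebra of $\langle g_{1}^{m+1}\rangle$ (where $n=2m+1$) and checks directly that $\phi_{J^{-1}}=1\otimes 1\otimes 1$; this is an independent, computational verification rather than a different strategy, so your citation of \cite{EG} alone is entirely adequate.
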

 \begin{proof} As pointed out in Remark 2.3, $A(n,q)=A(n,1,q)$ and thus the ``only if" part is just a direct consequence of Corollary 4.2. The sufficiency is prove in \cite{EG} by using conceptual
 way. Here we give another proof. Let $n=2m+1$ and construct $1_{i}^{n^{2}}:=\frac{1}{n^{2}}\sum_{j=0}^{n^{2}-1}q^{-ij}(g_{1}^{m+1})^{ij}$. Define
 $$J:=\sum_{i,j=0}^{n^{2}-1}q^{i(j-j')}1_{i}^{n^{2}}\otimes 1_{j}^{n^{2}}.$$
 One can verify that $\phi_{J^{-1}}=(1\otimes J^{-1})(id\otimes \Delta)(J^{-1})\phi_{1}(\Delta\otimes id)(J)(J\otimes 1)=1\otimes 1\otimes 1$. Thus $D(A(n,q))_{J^{-1}}$ is a Hopf algebra, which is $\mathbf{u}_{q}(\mathfrak{sl}_{2})$ obviously.
 \end{proof}

  \begin{remark} \emph{(1) In particular, the quasitriangular quasi-Hopf algebra $\Q_{1}\mathbf{u}_{q}(\mathfrak{sl}_{2})$
                    is not twist equivalent to a Hopf algebra if $n$ is even. In this case, we denote it by
                    $\Q\mathbf{u}_{q}(\mathfrak{sl}_{2})$. Its representation theory will be given elsewhere.}

                    \emph{(2) The subtensor category $\langle X\rangle$ constructed in the proof of Theorem 4.1 can be realized as the representation category of the following quasi-Hopf algebra.
                        Let $\chi$ be the character of $X$ and define $I:= \bigcap_{i=0}^{2s}\Ker\chi^{i}$. $I$ is a Hopf ideal of $\Q_{s}\mathbf{u}_{q}(\mathfrak{sl}_{2})$. Then $\langle X\rangle$ is isomorphic to Rep-$\Q_{s}\mathbf{u}_{q}(\mathfrak{sl}_{2})/I$. }
    \end{remark}

\section*{Acknowledgements}
I would like thank Professor Ng Siu-Hung for many stimulating discussions, and in particular for providing the main idea and the method to
prove the Theorem 4.1.  The author is also supported by Natural Science Foundation (No. 10801069).

\end{document}